\newtheorem{theorem}{Theorem}[section]
\theoremstyle{definition}
\newtheorem{lemma}[theorem]{Lemma}
\newtheorem{definition}[theorem]{Definition}
\newtheorem{remark}[theorem]{Remark}
\newtheorem{example}[theorem]{Example}
\def\red#1{\textcolor[rgb]{1.00,0.00,0.00}{#1}}
\def\blue#1{\textcolor[rgb]{0.00,0.00,1.00}{#1}}
\def\BZ{\mathbbm Z}
\def\BR{\mathbbm R}
\def\BC{\mathbbm C}
\def\calO{\mathcal O}
\def\calT{\mathcal T}
\def\li{\mathrm{Li}_2}
\def\be{\begin{equation}}
\def\ee{\end{equation}}
\renewcommand{\Ddot}{\mathring{D}}
\newcommand{\curly}{\mathrel{\leadsto}}
\begin{document}
	
\title[1-loop equals torsion for two-bridge knots]{1-loop equals torsion for
  two-bridge knots}
\author{Stavros Garoufalidis}
\address{
International Center for Mathematics, Department of Mathematics \\
Southern University of Science and Technology \\
Shenzhen, China
\newline
{\tt \url{http://people.mpim-bonn.mpg.de/stavros}}}
\email{stavros@mpim-bonn.mpg.de}

\author{Seokbeom Yoon}
\address{Department of Mathematics \\
Chonnam National University \\
Gwangju, South Korea \newline
{\tt \url{http://sites.google.com/view/seokbeom}}}
\email{sbyoon15@gmail.com}

\keywords{Kashaev invariant, Volume conjecture, 1-loop conjecture, 1-loop invariant, 
  Ohtsuki--Takata invariant, octahedral decomposition, ideal triangulation,
  knot, two-bridge knot}

\date{13 November, 2024}

\begin{abstract}
  Motivated by the conjectured asymptotics of the Kashaev invariant, Dimofte and
  the first author introduced a power series associated to a suitable ideal
  triangulation of a cusped hyperbolic 3-manifold, proved that its constant
  (1-loop) term is a topological invariant and conjectured that it equals to the
  adjoint Reidemeister torsion. We prove this conjecture for hyperbolic 
  2-bridge knots by combining the work of Ohtsuki--Takata with an explicit computation.
\end{abstract}

\maketitle
{
\footnotesize
\tableofcontents
}

\section{Introduction}
\label{sec.intro}

The celebrated volume conjecture of Kashaev predicts that the growth rate of the
synonymous invariant of a hyperbolic knot detects the the volume of the knot
complement \cite{Kas95}. This, combined with the result of Murakami--Murakami
that the Kashaev invariant is given by an evaluation of the colored Jones polynomial
~\cite{MM01} gives a deep connection between the Jones polynomial of a knot in 3-space
and hyperbolic geometry. The volume conjecture can be extended to a stronger
statement concerning the asymptotics of the Kashaev invariant to all orders in
perturbation theory~\cite{GL:asy,DGLZ}, and a natural question was to give a direct
definition of the corresponding power series. This was the motivation of~\cite{DG1},
where Dimofte and the first author introduced a power series associated to a
suitable ideal triangulation of a cusped hyperbolic 3-manifold, proved that its
constant (1-loop) term is a topological invariant and conjectured that it equals
to the adjoint Reidemeister torsion. 

More precisely, the 1-loop invariant depends on an ideal triangulation that detects
the geometric representation of a cusped hyperbolic 3-manifold $M$ (call such
triangulations essential), and it is an element of the invariant trace field of
$M$, well-defined up to a sign.
Moreover, it is unchanged under 2--3 and 0--2 Pachner moves of essential ideal
triangulations (see e.g.,~\cite[Sec.3]{DG1} and~\cite[Prop.5.1]{PW2023}).
Since every cusped hyperbolic 3-manifold has an essential ideal triangulation
obtained by subdividing the Epstein--Penner ideal cell decomposition, and the set
of essential ideal triangulations is connected under 2--3 and 0--2 Pachner moves,
as shown by Kalelkar--Schleimer--Segerman~\cite{KSS24}, it follows that  
the 1-loop is an invariant of a cusped hyperbolic 3-manifold. The conjecture
of~\cite{DG1} is that it equals (up to multiplication by a sign) to the adjoint
Reidemeister torsion. This conjecture is known for fibered cusped hyperbolic
3-manifolds \cite{DGY2023} and for fundamental shadow links~\cite{PW2023}
(see also \cite{AW2024}). Our goal is to prove that it also holds for hyperbolic
2-bridge knots.

\begin{theorem}
\label{thm.main}
The 1-loop equals torsion conjecture holds for hyperbolic 2-bridge knots.
\end{theorem}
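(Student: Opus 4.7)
The plan is to fix a convenient essential ideal triangulation of a hyperbolic 2-bridge knot complement, compute the 1-loop invariant and the adjoint Reidemeister torsion with respect to that triangulation, and match the two answers. Since both sides of the conjectured equality are topological invariants (the 1-loop by the Pachner-invariance cited above, the torsion a priori), it suffices to verify the identity for any single essential triangulation of our choosing.

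The triangulation I would use is the octahedral decomposition associated with a 2-bridge diagram $K(p/q)$: each crossing contributes one ideal octahedron, subdivided into four ideal tetrahedra in the standard way. This is the triangulation used by Ohtsuki--Takata in their study of the asymptotics of the Kashaev invariant, and its combinatorics is governed by the continued fraction expansion of $p/q$, giving the shape parameters, gluing equations and cusp equations an explicit recursive structure. One first checks essentiality at the geometric representation, which follows from the standard analysis of the octahedral shapes at the hyperbolic structure for 2-bridge knots.

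With this triangulation in hand, the 1-loop invariant unwinds as $\pm \tfrac{1}{2}\det(G)$ times an explicit monomial in the shape parameters, where $G$ is the block-structured matrix built from the gluing and meridian/longitude data. The point of invoking Ohtsuki--Takata is that their 1-loop computation for the Kashaev integral essentially carries out this determinant evaluation and rewrites it in closed form indexed by the crossings of the 2-bridge diagram. On the torsion side, I would use the two-generator one-relator presentation of a 2-bridge knot group together with Fox calculus at the adjoint of the geometric representation, which gives the adjoint Reidemeister torsion as a ratio of minors of a $\mathfrak{sl}_2$-Fox matrix of comparable size.

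The main obstacle, and the place where the actual work of the paper should live, is the identification of these two determinants: the Ohtsuki--Takata determinant associated to the octahedral triangulation on the one hand, and the adjoint Fox determinant on the other. I would attack this by exhibiting explicit row and column operations (organized along the continued fraction syllables of $p/q$) that transform one matrix into the other, with a trackable determinantal correction coming from the shape-parameter monomial. An inductive argument on the length of the continued fraction expansion, with the base case handled directly (e.g.\ the figure-eight knot), seems the most realistic route. Once equality of the two determinants is established up to the prescribed sign ambiguity, Theorem \ref{thm.main} follows.
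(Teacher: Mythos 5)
Your overall architecture (octahedral decomposition of a 2-bridge diagram, Ohtsuki--Takata, reduce to a determinant identity) points in the right direction, but the proposal locates the work in the wrong place and leaves the actual hard step unaddressed. First, a structural point: the octahedral decomposition is an ideal cell decomposition of $S^3\setminus(K\cup\{p,q\})$, not of the knot complement, so the 1-loop invariant is not defined on it as you state; one must first collapse it to an ideal triangulation $\calT_{\Ddot}$ of $S^3\setminus K$, and this collapse is combinatorially nontrivial (tetrahedra around the chosen segment disappear, and C-, R-, O-, U-edges get identified across types). More seriously, you treat the identification of the 1-loop determinant with the Ohtsuki--Takata determinant as essentially automatic (``their 1-loop computation \dots essentially carries out this determinant evaluation''). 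It is not: the 1-loop invariant is an $N\times N$ determinant in the shape parameters of $\calT_{\Ddot}$, normalized by a flattening monomial $\boldsymbol\zeta^{\boldsymbol f}$, while the Ohtsuki--Takata invariant is an $n\times n$ Hessian of the Yokota potential in the diagram variables, normalized by $\Omega_1\Omega_2$, with $n<N$. Bridging these requires (i) constructing an explicit flattening of the collapsed triangulation, (ii) eliminating the $N-n$ shape parameters attached to the C- and R-edge gluing equations and the meridian equation while tracking the Jacobian correction, and (iii) proving that $\Omega_1\Omega_2$ equals the resulting ratio of $\zeta$-monomials. This is precisely the content that a complete proof must supply, and your proposal contains none of it.

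Second, the step you do identify as the main obstacle --- matching the diagrammatic determinant with an adjoint Fox-calculus determinant for the two-bridge group presentation --- is exactly the statement of Ohtsuki--Takata's theorem \cite[Thm.1.1]{OT2015}, which is already proved and can simply be cited. Proposing to re-derive it by ad hoc row and column operations organized along the continued fraction expansion is not a viable plan as written: no mechanism is given for how the Neumann--Zagier-type matrix should transform into the $\mathfrak{sl}_2$-adjoint Fox matrix, and producing such a transformation in general is essentially the open 1-loop conjecture itself. The efficient route is to prove $\tau(\calT_{\Ddot})=\pm\omega(\Ddot)$ by the reduction sketched above and then invoke the Ohtsuki--Takata theorem to pass from $\omega(\Ddot)$ to the adjoint Reidemeister torsion; your proposal inverts this, spending its effort on the part that is already known while omitting the part that is new.
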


Let us comment on this conjecture and our proof for hyperbolic 2-bridge knots.
Both the 1-loop invariant and the adjoint Reidemeister torsion are given (up to
normalization factors) by the determinant of a matrix with coefficients in the
trace field. For the 1-loop, the size of the matrix is the number of tetrahedra
of an essential ideal triangulation, and the matrix is obtained by the Neumann--Zagier
matrices of the ideal triangulation and their shapes. Alternatively, as was shown by
the second author, the 1-loop is essentially the Jacobian of Ptolemy
equations of an essential ideal triangulation~\cite{Yoon2024}. On the other hand, 
a matrix for the adjoint Reidemeister torsion can be obtained from a presentation of
the fundamental group of the 3-manifold $M$ by applying Fox calculus first on
a relator set of $\pi_1(M)$, and then replacing $\pi_1(M)$ by the adjoint
representation of $\mathfrak{sl}_2(\BC)$ using
the geometric representation. An ideal triangulation does give a presentation
of the fundamental groupoid of $M$, and after further choices, of $\pi_1(M)$.
Thus, both the 1-loop and the adjoint Reidemeister torsion can be defined from
an essential ideal triangulation, but their definitions have different origins and
are not exactly compatible. 

To prove our Theorem~\ref{thm.main}, we use an alternative approach
from~\cite{DGY2023}. Starting from a planar projection of a 2-bridge knot,
we consider the corresponding octahedral decomposition of its complement minus two
points, then collapse some tetrahedra to obtain an ideal triangulation of its
complement (see Section~\ref{sec.octa} for details).
We then eliminate a certain number of variables and prove by an explicit
calculation that the corresponding Jacobian is given by the Ohtsuki--Takata invariant
of the initial planar projection (see Section~\ref{sec.pf} for details).
Using Ohtsuki--Takata's theorem, \cite[Thm.1.1]{OT2015} for hyperbolic 2-bridge knots,
we conclude the proof of Theorem~\ref{thm.main}.

In theory, the proof of Theorem~\ref{thm.main} should apply to essential planar
projections of hyperbolic knots, but the intermediate calculations are not clear
to us, and more fundamentally, 

\begin{itemize}
\item[(a)]
  It is not known that every hyperbolic knot has an essential open planar diagram.
\item[(b)]
  Although every two planar diagrams of a knot are connected by Reidemeister moves,
  it is not known if this holds for the set of essential open planar diagrams, nor
  that there is a canonical connected component of that set. 
\end{itemize}
In contrast, if we replace essential planar diagrams with essential ideal
triangulations, both problems are solved. For (a), one can use a subdivision of
the canonical ideal cell decomposition of a cusped hyperbolic 3-manifold. For (b)
one can use a canonical connected component, namely the set of subdivisions of the
canonical ideal cell decomposition~\cite{DG1}, and even better, it is now known
that the set of essential ideal triangulations is connected~\cite{KSS24}.

On the positive side, the standard diagrams of hyperbolic 2-bridge knots are
essential~\cite{OT2015}, and more generally, the alternating reduced
diagrams of alternating hyperbolic knots are essential~\cite{GMT,SY}. Note that
2-bridge knots are alternating, and all of them, with the exception of $(2,b)$-torus
knots, are hyperbolic.


\section{Two invariants from the asymptotics of the Kashaev invariant}
\label{sec.prelim}

In this section we recall the 1-loop and Ohtsuki--Takata invariants, 
following \cite{DG1} and \cite{OT2015}. They are both expected to equal to each other
and to be the constant term of the asymptotic expansion of the Kashaev invariant, but
their definitions are a bit different. The 1-loop invariant is modeled on Chern--Simons
perturbation theory, and is defined using an essential ideal triangulation of a
knot complement, whereas the Ohtsuki--Takata invariant depends on a planar projection
of a knot and the invariant is obtained by applying stationary phase to a state-sum
formula the Kashaev invariant.

\subsection{The 1-loop invariant}
\label{sec.OL}

The 1-loop invariant is a function
\be
\tau : \{ \text{essential triangulations} \} \to \BC^\times
\ee
that we now recall following~\cite{DG1}. We fix an essential ideal triangulation
$\calT$ of an oriented, cusped hyperbolic 3-manifold $M$ (such as the complement
of a hyperbolic knot in $S^3$) and denote the edges and tetrahedra of $\calT$ by
$e_i$ and by $\Delta_j$, respectively, for $1 \leq i,j \leq N$. Note that the number of
edges is equal to that of tetrahedra, as $M$ has the Euler
characteristic 0. We fix a quad type of each tetrahedron $\Delta_j$. This 
means that each edge of $\Delta_j$ is assigned to a shape parameter among 
\begin{equation*}
z_j, \quad z'_j:=\frac{1}{1-z_j}, \quad \textrm{or} \quad z''_j:=1- \frac{1}{z_j}
\end{equation*}
with opposite edges having same parameters as in Figure~\ref{fig.tetrahedron}. 

\begin{figure}[htpb!]
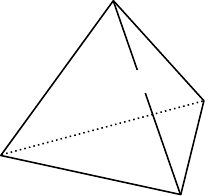
\caption{An ideal tetrahedron with shape parameters.}
\label{fig.tetrahedron}
\end{figure}

The \emph{gluing equation matrices} $G, G'$ and $G''$  are $N \times N$ integer
matrices whose rows and columns are indexed by the edges and by the tetrahedra
of $\calT$,  respectively. The $(i,j)$-entry of $G$ (resp., $G'$ and $G''$) is 
the number of edges of $\Delta_j$ with the shape parameter $z_j$ (resp., $z'_j$
and $z''_j$) that are identified with $e_i$. As the name suggests, they 
determine the gluing equations of $\calT$: the gluing equation of an edge $e_i$
is given as 
\be
\label{eqn.e}
e_i :  \quad \sum_{j=1}^N  G_{ij} \log z_j + G'_{ij} \log z'_j
+ G''_{ij} \log z''_j = 2 \pi i \, .
\ee 
It is known that the gluing equation matrices have redunancy, and we replace
one of their rows by using a meridian $\mu$ of $K$. Precisely, let $(C,C',C'')$ 
be a triple of row vectors in $\BZ^N$ that describe the completeness equation
of $\mu$ as
\be
\label{eqn.mu}
\mu : \quad \sum_{j=1}^N C_{j}\log z_j +C'_{j} \log z'_j
+ C''_{j} \log z''_j  = 0 \, .
\ee
We replace one row of $G$, $G'$ and $G''$ with $C$, $C'$ and $C''$, respectively,
and denote by $G_\mu$, $G'_\mu$ and $G''_\mu$ the resulting matrices.

One auxiliary ingredient for defining the 1-loop invariant is a 
\emph{flattening}. It is a triple of column vectors $f, f', f'' \in \BZ^N$
satisfying
\begin{align*}
f+ f' +  f'' & =(1,\ldots,1)^t \,, \\
G f + G' f' +G'' f'' & =(2,\ldots,2)^t \, .
\end{align*}
Note that a flattening in \cite{DG1} requires one additional condition,
but it turned out to be dispensable \cite{Yoon2024}.

We now come to the assumption that $M$ is hyperbolic and $\calT$ is essential. 
This allows us to find a 
geometric solution $z^\circ=(z^\circ_1,\ldots, z^\circ_N )$
of $\calT$, whose holonomy representation is geometric. Here a solution
means  a tuple of complex numbers, other than 0 and 1, satisfying 
the gluing equation~\eqref{eqn.e} for all edges and the completeness
equation~\eqref{eqn.mu}. 

\begin{definition}[\cite{DG1}]
The \emph{1-loop invariant} of an essential triangulation $\calT$ is defined as
\begin{equation*}
\tau(\calT)  :=  \pm
\frac{ \det \left( G_\mu\, \mathrm{diag}(\zeta)	+G_\mu' \, \mathrm{diag}(\zeta')
+ G_\mu'' \, \mathrm{diag}(\zeta'') \right)}{2 \!\!
\displaystyle\prod_{1 \leq j \leq N} \! \!
\zeta_j^{f_j} {\zeta_j'}^{f_j'} {\zeta_j''}^{f_j''}} 
\end{equation*}
where the right-hand side is evaluated at the geometric solution $z^\circ$. Here
\begin{equation*}
\zeta_j := \frac{d \log z_j}{dz_j}=\frac{1}{z_j}, \quad
\zeta_j' := \frac{d \log z_j'}{dz_j}=\frac{1}{1-z_j},\quad
\zeta_j'' := \frac{d \log z_j''}{dz_j}=\frac{1}{z_j(z_j-1)}   \, ,
\end{equation*}
and
$\mathrm{diag}(\zeta^\square)$ is the diagonal matrix with diagonal entries
$\zeta^\square_1,\ldots,\zeta^\square_N$ for  $\square \in \{ \ , ' ,''\}$.
\end{definition}

\begin{remark}
\label{rmk.der}
The above definition is a symmetric version of the original one given in \cite{DG1}
and was introduced by Siejakowski \cite{Sie21}. It is worth noting that  
\begin{equation*}
\det \left( G_\mu\, \mathrm{diag}(\zeta)+G_\mu' \, \mathrm{diag}(\zeta')+ G_\mu'' \,
\mathrm{diag}(\zeta'') \right) = \det
\left( \dfrac{\partial (g_1,\ldots, g_{N-1}, g_\mu)} {\partial (z_1,\ldots,z_N)}\right)
\end{equation*}
where $g_i$ (resp., $g_\mu$) refers to the left-hand side of
\eqref{eqn.e} (resp., \eqref{eqn.mu}).
\end{remark} 

It was proved in~\cite{DG1} that the 1-loop invariant $\tau(\calT)$ does not depend
on the choice of a flattening (hence it is omitted in the notation) and that it is
unchanged under Pachner 2--3 moves and 0--2 on essential triangulations;
see~\cite[Sec.3]{DG1} for the 2--3 move and~\cite[Prop.5.1]{PW2023} for the
historically ommitted 0--2 move. Since every cusped hyperbolic
manifold has an essential triangulation, (obtained for instance by subdividing the
Epstein--Penner ideal cell decomposition as was used in~\cite{DG1}), and the set
of essential triangulations is connected under Pachner 2--3 and 0--2 moves, it
follows that it is a topological invariant. By its very definition, the 1-loop
invariant is an element of the invariant trace field of the cusped hyperbolic
3-manifold,well-defined up to multiplication by a sign.

\begin{example}
\texttt{SnapPy}'s default triangulation $\calT$ of the knot $6_1$ consists of
4 tetrahedra with gluing equation matrices
\begin{equation*}
G_\mu = 
\begin{pmatrix}
1 & 1 & 0 & 0  \\
0 & 0 & 0 & 1  \\
0 & 1 & 1 & 0  \\
-1 & 0 & 0 & 0 
\end{pmatrix}, \
G'_\mu =
\begin{pmatrix}
0 & 2 & 0 & 1  \\
1 & 0 & 1 & 0  \\
1 & 0 & 0 & 0  \\
1 & 0 & 0 & 0   
\end{pmatrix}, \
G''_\mu = 
\begin{pmatrix}
1 & 0 & 1 & 1  \\
1 & 2 & 1 & 0  \\
0 & 0 & 0 & 1  \\
0 & 1 & 0 & 0  
\end{pmatrix}
\, .
\end{equation*}
The 1-loop invariant, evaluated at the geometric solution
\begin{align*}
z^\circ \approx \ & (0.89152 -1.55249i ,\, 0.043315 - 0.64120i, 
\ -1.50411-1.22685i, \, 0.17385 -1.06907i)
\end{align*}
with a flattening 
\begin{equation*}
(f,f',f'')= ( (0,1,0,0)^t, \ (1,0,1,1)^t, \ (0,0,0,0)^t) \,,
\end{equation*}
is given by $\tau(\calT) \approx  0.487465 + 1.738045 i$. This agrees with
the adjoint Reidemeister torsion of the knot $6_1$.
\end{example}

\subsection{Essential open diagrams and their potential}
\label{sub.PD}

The domain of the Ohtsuki--Takata invariant discussed below is the set of
essential open diagrams of knots. An open diagram is a planar diagram 
of a $(1,1)$-tangle and is said to be essential if the corresponding 
collapsed triangulation is essential. We postpone a detailed description of
the collapsed ideal triangulation to Section~\ref{sec.octa}. Roughly, this is
a story of how to pass from a planar diagram of a knot to an ideal triangulation of
its complement and use it to determine the complete hyperbolic structure of a
hyperbolic knot. Like so many things in hyperbolic geometry, this method goes
back to Thurston.

We here recall a potential function associated to an open diagram, introduced by
Yokota \cite{Yokota02} and motivated by the $R$-matrix state-sum formulas for
the Kashaev invariant of a knot.

In what follows, we fix an open diagram $\Ddot$ of a hyperbolic knot $K$ in $S^3$,
that is, a planar projection of a $(1,1)$-tangle whose closure is $K$. For technical
reasons, we will assume that 
\begin{center}
$(\dagger)$
starting from one endpoint of $\Ddot$, we overpass the first crossing, and
from the other endpoint, we underpass the first crossing.
\end{center}

Viewing $\Ddot$ as an embedded 4-valent graph with two univalent vertices
in $\BR^2$, 
\begin{itemize}
\item a \emph{segment} refers to an edge of the graph that is not adjacent
to a univalent vertex,
\item a \emph{region} refers to a connected component of the complement of
the graph, and 
\item a \emph{corner} refers to a fan-shaped area around a crossing separated
by segments. 
\end{itemize}

We assign the constant $1$ to every segment of $\Ddot$ adjacent to the unbounded
region. Also, starting from an endpoint of $\Ddot$, if we underpass (resp., overpass) 
the first crossing, we assign the constant $\infty$ (resp., $0$) to each segment
until we encounter an overpass (resp., underpass). To each segment not
assigned a constant, we assign a variable so that every segment receives either
a constant or a variable. For a non-alternating diagram, consecutive segments
could be assigned with $\infty$. If it is the case, for each crossing lying
between such segments, we set two adjacent variables, other than $\infty$,
to be equal. We do the same for consecutive segments assigned with $0$. 

\begin{example}
\label{ex.61}  
An essential open diagram for the $6_1$ knot, along with an assignment
of variables to its segments is shown in Figure~\ref{fig.example}. 
\end{example}

\begin{figure}[htpb!]
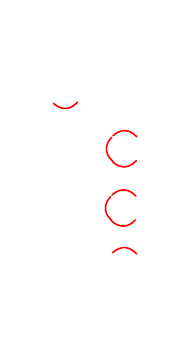
\caption{An open diagram of the knot $6_1$ with 3 variables $x_1$, $x_2$,
$x_3$ and 8 essential corners marked in red.}
\label{fig.example}
\end{figure}

We now define the potential function $V$ of $\Ddot$, following Yokota.
We associate a dilogarithm function with each corner as in
Figure~\ref{fig.crossing}.  Such dilogarithms could be ill-defined or constant.
This happens precisely when either $x$ or $y$ in Figure~\ref{fig.crossing} is
$0$ or $\infty$, or when both $x$ and $y$ are $1$. We say that a corner is
\emph{essential} if it is not the case, i.e. if the associated dilogarithm is
well-defined and non-constant. The \emph{Yokota potential function} $V$ is 
defined as the sum of those dilogarithms over all essential corners. 
\begin{figure}[htpb!]
\begingroup%
  \makeatletter%
  \providecommand\color[2][]{%
    \errmessage{(Inkscape) Color is used for the text in Inkscape, but the package 'color.sty' is not loaded}%
    \renewcommand\color[2][]{}%
  }%
  \providecommand\transparent[1]{%
    \errmessage{(Inkscape) Transparency is used (non-zero) for the text in Inkscape, but the package 'transparent.sty' is not loaded}%
    \renewcommand\transparent[1]{}%
  }%
  \providecommand\rotatebox[2]{#2}%
  \newcommand*\fsize{\dimexpr\f@size pt\relax}%
  \newcommand*\lineheight[1]{\fontsize{\fsize}{#1\fsize}\selectfont}%
  \ifx\svgwidth\undefined%
    \setlength{\unitlength}{382.67716535bp}%
    \ifx\svgscale\undefined%
      \relax%
    \else%
      \setlength{\unitlength}{\unitlength * \real{\svgscale}}%
    \fi%
  \else%
    \setlength{\unitlength}{\svgwidth}%
  \fi%
  \global\let\svgwidth\undefined%
  \global\let\svgscale\undefined%
  \makeatother%
  \begin{picture}(1,0.15555556)%
    \lineheight{1}%
    \setlength\tabcolsep{0pt}%
    \put(0,0){\includegraphics[width=\unitlength,page=1]{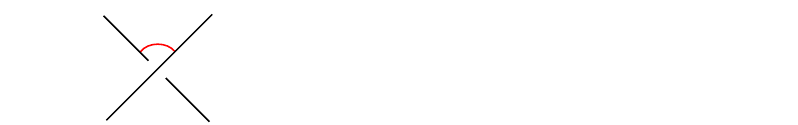}}%
    \put(0.11765485,0.14343033){\makebox(0,0)[lt]{\lineheight{1.25}\smash{\begin{tabular}[t]{l}$x$\end{tabular}}}}%
    \put(0.26719821,0.14593571){\makebox(0,0)[lt]{\lineheight{1.25}\smash{\begin{tabular}[t]{l}$y$\end{tabular}}}}%
    \put(0.33458057,0.07102404){\makebox(0,0)[lt]{\lineheight{1.25}\smash{\begin{tabular}[t]{l}$\li\left(\dfrac{x}{y}\right)$\end{tabular}}}}%
    \put(0,0){\includegraphics[width=\unitlength,page=2]{crossing.pdf}}%
    \put(0.56271851,0.14297279){\makebox(0,0)[lt]{\lineheight{1.25}\smash{\begin{tabular}[t]{l}$x$\end{tabular}}}}%
    \put(0.71550776,0.1426332){\makebox(0,0)[lt]{\lineheight{1.25}\smash{\begin{tabular}[t]{l}$y$\end{tabular}}}}%
    \put(0.78470117,0.07326431){\makebox(0,0)[lt]{\lineheight{1.25}\smash{\begin{tabular}[t]{l}$-\li\left(\dfrac{y}{x}\right)$\end{tabular}}}}%
    \put(0,0){\includegraphics[width=\unitlength,page=3]{crossing.pdf}}%
  \end{picture}%
\endgroup%

\caption{Dilogarithm functions associated with corners.}
\label{fig.crossing}
\end{figure}

We now discuss a key point, namely an isomorphism of the critical points of the
potential function $V$ with the solutions to the gluing and completeness
equations of the collapsed triangulation $\calT_{\Ddot}$; see~\cite[Thm.4.1]{MY2018}.
Here a critical point of $V$ means a point $x=(x_1,\ldots,x_n) \in \BC^n$
satisfying
\begin{equation}
\label{eqn.crit}
\exp  \left ( x_i \, \dfrac{\partial V}{\partial x_i} \right) = 1,
\qquad i=1,\dots, n\,.
\end{equation}
Under this isomorphism,
the essential corners of $\Ddot$ are in bijection with the tetrahedra of
$\calT_{\Ddot}$, and the the variables of the Yokota potential function are related
to the shapes of the tetrahedra of $\calT_{\Ddot}$ according to
Figure~\ref{fig.crossing2}. In particular, if $\Ddot$ is essential, there is a
geometric solution $x^\circ=(x^\circ_1,\ldots,x^\circ_n)$ to Equation~\eqref{eqn.crit}
which gives rise to the complete hyperbolic structure of $\calT_{\Ddot}$.

\begin{figure}[htpb!]
\begingroup%
  \makeatletter%
  \providecommand\color[2][]{%
    \errmessage{(Inkscape) Color is used for the text in Inkscape, but the package 'color.sty' is not loaded}%
    \renewcommand\color[2][]{}%
  }%
  \providecommand\transparent[1]{%
    \errmessage{(Inkscape) Transparency is used (non-zero) for the text in Inkscape, but the package 'transparent.sty' is not loaded}%
    \renewcommand\transparent[1]{}%
  }%
  \providecommand\rotatebox[2]{#2}%
  \newcommand*\fsize{\dimexpr\f@size pt\relax}%
  \newcommand*\lineheight[1]{\fontsize{\fsize}{#1\fsize}\selectfont}%
  \ifx\svgwidth\undefined%
    \setlength{\unitlength}{268.04196816bp}%
    \ifx\svgscale\undefined%
      \relax%
    \else%
      \setlength{\unitlength}{\unitlength * \real{\svgscale}}%
    \fi%
  \else%
    \setlength{\unitlength}{\svgwidth}%
  \fi%
  \global\let\svgwidth\undefined%
  \global\let\svgscale\undefined%
  \makeatother%
  \begin{picture}(1,0.29418902)%
    \lineheight{1}%
    \setlength\tabcolsep{0pt}%
    \put(0,0){\includegraphics[width=\unitlength,page=1]{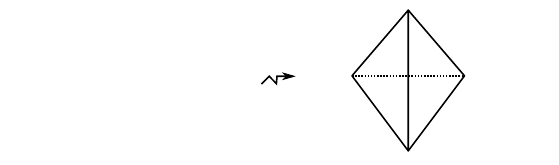}}%
    \put(0.59137521,0.14824103){\makebox(0,0)[lt]{\lineheight{1.25}\smash{\begin{tabular}[t]{l}$x$\end{tabular}}}}%
    \put(0.84511155,0.15160522){\makebox(0,0)[lt]{\lineheight{1.25}\smash{\begin{tabular}[t]{l}$y$\end{tabular}}}}%
    \put(0.72061823,-0.01790883){\makebox(0,0)[lt]{\lineheight{1.25}\smash{\begin{tabular}[t]{l}$0$\end{tabular}}}}%
    \put(0.71098434,0.28733053){\makebox(0,0)[lt]{\lineheight{1.25}\smash{\begin{tabular}[t]{l}$\infty$\end{tabular}}}}%
    \put(0,0){\includegraphics[width=\unitlength,page=2]{crossing2.pdf}}%
    \put(0.19416888,0.24208018){\makebox(0,0)[lt]{\lineheight{1.25}\smash{\begin{tabular}[t]{l}$x$\end{tabular}}}}%
    \put(0.41840878,0.2474471){\makebox(0,0)[lt]{\lineheight{1.25}\smash{\begin{tabular}[t]{l}$y$\end{tabular}}}}%
    \put(0,0){\includegraphics[width=\unitlength,page=3]{crossing2.pdf}}%
  \end{picture}%
\endgroup%

\caption{An ideal tetrahedron at an essential corner.}
\label{fig.crossing2}
\end{figure}

\noindent
{\bf Example~\ref{ex.61} continued.} 
The diagram in Figure~\ref{fig.example} has 8 essential corners marked by red
and the Yokota potential function is given by
\begin{align*}
V & =  \   \li \left( x_1 \right) -\li \left(\dfrac{1}{x_1} \right) + 
\li \left(\dfrac{x_2}{x_1} \right) - \li \left( x_2 \right) \\
& \qquad -  \li \left(\dfrac{1}{x_2} \right) +  \li \left(\dfrac{x_3}{x_2} \right) - 
\li \left(x_3 \right) - \li \left(\dfrac{1}{x_3} \right) \, .
\end{align*}
The critical point equations~\eqref{eqn.crit} are given by
\begin{equation*}
\frac{-x_1+x_2}{(-1+x_1)^2} =1, \, \frac{x_1(x_2-x_3)}{-x_1 + x_2} =1, \,
\frac{x_2 x_3}{-x_2 + x_3} =1 \,,
\end{equation*}
and equivalently, in the form 
\be
2 - 5 x_1 + 6 x_1^2 - 3 x_1^3 + x_1^4 =0, \qquad x_2 = 1 - x_1 + x_1^2, \qquad
x_3 = \frac{1}{2}(1 + 2 x_1 - x_1^2 + x_1^3)
\ee
and the geometric solution is approximately 
\be
\label{61xapp}
x^\circ \approx \,\, (0.895123 -1.552491 i , \, -1.504108 - 1.226851 i, \, 
-0.677958 -0.157779i) \, .
\ee

\subsection{The Ohtsuki--Takata invariant}
\label{sub.OT}

To define the Ohtsuki--Takata invariant, we consider two more functions
associated with $\Ddot$. To define the first one, 
for each crossing, cup and cap of $\Ddot$, we choose two corners
around it and associate a rational function with each of them, shown as in
Figure~\ref{fig.omega1}. These  functions are well-defined and non-zero
for essential corners. The \emph{first normalizing function} $\Omega_1$ is
defined as the product of these rational functions over all essential corners
among chosen ones. 

\begin{figure}[htpb!]
	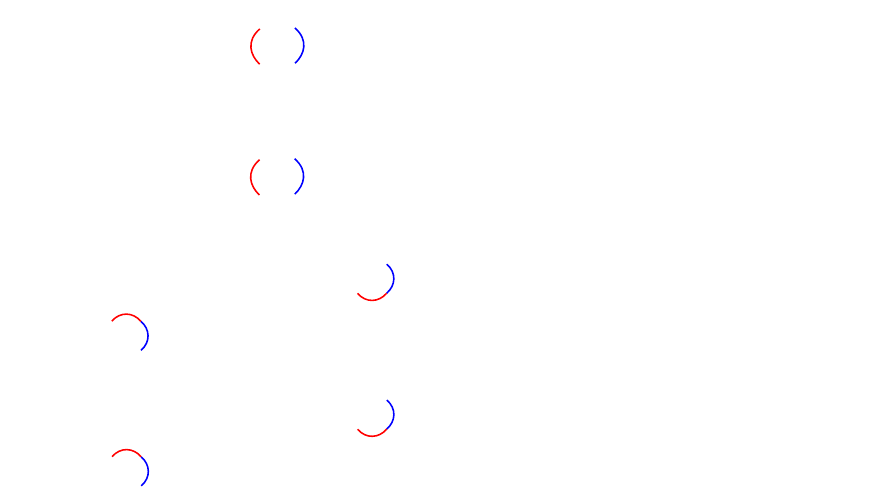
	\caption{Rational functions associated with crossings, cups and caps.}
	\label{fig.omega1}
\end{figure}

To define the last function, we fix an orientation of $\Ddot$. Then choose
one corner around 
each crossing of $\Ddot$ and associate a rational function with the corner as
in Figure~\ref{fig.omega2}. These functions are well-defined for essential
corner. The \emph{second normalizing function} $\Omega_2$ is defined as the
product of these rational functions over all essential corners among chosen
ones. 

\begin{figure}[htpb!]
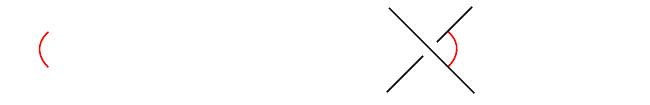
\caption{Rational functions associated with crossings.}
\label{fig.omega2}
\end{figure}

We now have all the ingredients to define the Ohtsuki--Takata invariant.

\begin{definition}[\cite{OT2015}]
The \emph{Ohtsuki--Takata invariant} of an essential open diagram $\Ddot$
is defined as
\begin{equation}
\label{eqn.OT}
\omega(\Ddot) := \pm \frac{\Omega_1 \Omega_2}{2} \, \det \left (
x_i \dfrac{\partial}{\partial x_i} \left(x_j\, \dfrac{\partial V}{ \partial x_j}
\right)\right)_{1 \leq i,j \leq n} 
\end{equation}
where the right-hand side is evaluated at the solution $x^\circ$. 
\end{definition}

It was proved that the Ohtsuki--Takata invariant is invariant under Reidemeister
II and III moves involving interior crossings of essential diagrams
\cite[Prop.4.3]{OT2015}, and was conjectured that the invariant $\omega(\Ddot)$ is
equal  to the adjoint Reidemeister torsion associated to the holonomy representation.

\noindent
{\bf Example~\ref{ex.61} continued.}
The normalizing functions $\Omega_1$ and $\Omega_2$  of the essential open diagram
$\Ddot$ given in Figure~\ref{fig.example}  are given by
\begin{equation*}
\Omega_1 =
\left(1-\frac{x_2}{x_1}\right)
\left( 1-\frac{x_3}{x_2}\right), \qquad
\Omega_2  =  \frac{1}{x_1^2} x_1^2 \, \dfrac{1}{x_3^2} \, x_3^2 = 1 \,.
\end{equation*}
The Ohtsuki--Takata invariant, evaluated at the geometric solution~\eqref{61xapp}
is $\omega(\Ddot) \approx  -0.487465 - 1.738045 i$. This agrees with
the adjoint Reidemeister torsion of the knot $6_1$.


\section{From planar diagrams to ideal triangulations}
\label{sec.octa}

In this section we recall a well-known connection between planar projections
of (hyperbolic) knots and their ideal triangulations. The history of the subject
predates quantum topology, and the sought algorithm 
takes as input a planar projection $D$ of a knot $K$, produces as an intermediate
stage an octahedral cell decomposition of its complement minus two points, and
eventually produces an ideal triangulation  of its complement $S^3 \setminus K$.
This algorithm due to Thurston appears in the code of \texttt{SnapPy} in the original
version due to Weeks~\cite{snappy}.
It is described in detail in~\cite{Weeks}, and forms the core  method of
\texttt{SnapPy} for computing numerically (or exactly) the complete hyperbolic
structure of a hyperbolic knot (or more generally, link) complement. 

Years after its discovery and its use in hyperbolic geometry, it was realized that
the above algorithm has applications to quantum topology, and more precisely to
the volume conjecture. The reason being obvious syntactical similarities between the
octahedral decomposition of a planar projection of a knot and the state-sum formula
for its Kashaev invariant, where the shapes of 4 tetrahedra around a crossing of the
knot match with the quantum factorials in Kashaev's $R$-matrix of his invariant.
This connection was discussed by D. Thurston in relation to the Kashaev invariant
and its asymptotics~\cite{Thu99}, and later by Yokota~\cite{Yokota02},
see also~\cite[Sec.8.2]{GL:asy}.
  
The algorithm consists of two steps
\be
\label{2steps}
\Ddot \curly (\calO_D,s) \curly \calT_{\Ddot}
\ee
which we next discuss in detail.

\subsection{Octahedral decomposition}
\label{sec.oc}

Let $K$ be a knot in $S^3$ and $D$ be a knot diagram of $K$.
Viewing $D$ as an embedded 4-valent graph  in $\BR^2$, a \emph{segment}
refers to an edge of the graph and a \emph{region} refers to a connected component
of the complement of the graph. If we add over/under-pass information at every
crossing as usual, the graph splits up into intervals, which we call \emph{over-arcs}.
Reversing all the over/under-pass information, the graph splits up into
different intervals, which we call \emph{under-arcs}.

Place an ideal octahedron at every crossing as in Figure~\ref{fig.octahedronatc}
(left) and add an edge joining the top and bottom vertices so that the
octahedron is divided into four tetrahedra. Then each tetrahedron is placed at
a corner and has two faces glued to adjacent tetrahedra. We record these
information by triangles and dashed lines as in Figure~\ref{fig.octahedronatc}
(right). Here a triangle represents a tetrahedron, viewed from the top.
We classify the edges of an octahedron into four types:
\begin{itemize}
\item
  a \emph{C-edge} is the edge  joining the top and bottom vertices; 
\item
  an \emph{R-edge} is an edge lying in the equator of the octahedron;
\item
  an \emph{O-edge} (resp., \emph{U-edge}) is a non-equatorial edge,
  transverse to the overpass (resp., underpass) when viewed from the top.
\end{itemize} 
Note that each (outer) face of an octahedron has one R-edge, one O-edge and
one U-edge.

\begin{figure}[htpb!]
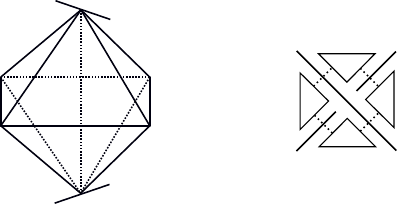
\caption{An octahedron at a crossing.}
\label{fig.octahedronatc}
\end{figure}

A segment joins two crossings, hence two octahedra. On each side of the
segment, there is a pair of faces that face each other along the segment. See
Figure~\ref{fig.octahedron} (left) for the alternating case. We glue those two
faces so that edges of the same type are identified. In this way, each segment
determines two face-pairings. We record them with dashed lines having
signs at the ends, shown as in Figure~\ref{fig.octahedron} (right). Here the
sign indicates whether the face lies in the upper or lower part of the octahedron.
Then every triangle is attached with four dashed lines: one with
$+$, another with $-$, and the others with no signs. Namely, all faces of the
tetrahedra are glued. This results in an ideal cell decomposition $\calO_D$
of $S^3 \setminus (K \cup \{p,q\})$ (with $p$ and $q$ two points not in $K$)
with cells being ideal octahedra, known as the \emph{octahedral decomposition}
associated with $D$.  We refer to \cite{KKY18} for details.

\begin{figure}[htpb!]
\begingroup%
  \makeatletter%
  \providecommand\color[2][]{%
    \errmessage{(Inkscape) Color is used for the text in Inkscape, but the package 'color.sty' is not loaded}%
    \renewcommand\color[2][]{}%
  }%
  \providecommand\transparent[1]{%
    \errmessage{(Inkscape) Transparency is used (non-zero) for the text in Inkscape, but the package 'transparent.sty' is not loaded}%
    \renewcommand\transparent[1]{}%
  }%
  \providecommand\rotatebox[2]{#2}%
  \newcommand*\fsize{\dimexpr\f@size pt\relax}%
  \newcommand*\lineheight[1]{\fontsize{\fsize}{#1\fsize}\selectfont}%
  \ifx\svgwidth\undefined%
    \setlength{\unitlength}{345.67309282bp}%
    \ifx\svgscale\undefined%
      \relax%
    \else%
      \setlength{\unitlength}{\unitlength * \real{\svgscale}}%
    \fi%
  \else%
    \setlength{\unitlength}{\svgwidth}%
  \fi%
  \global\let\svgwidth\undefined%
  \global\let\svgscale\undefined%
  \makeatother%
  \begin{picture}(1,0.33888542)%
    \lineheight{1}%
    \setlength\tabcolsep{0pt}%
    \put(0,0){\includegraphics[width=\unitlength,page=1]{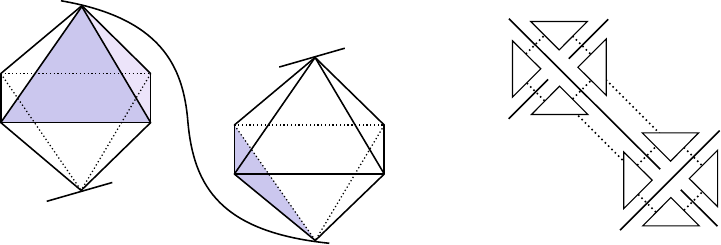}}%
    \put(0.77474084,0.15645441){\makebox(0,0)[lt]{\lineheight{1.25}\smash{\begin{tabular}[t]{l}$+$\end{tabular}}}}%
    \put(0.82696148,0.1089888){\makebox(0,0)[lt]{\lineheight{1.25}\smash{\begin{tabular}[t]{l}$-$\end{tabular}}}}%
    \put(0.84357005,0.23078555){\makebox(0,0)[lt]{\lineheight{1.25}\smash{\begin{tabular}[t]{l}$+$\end{tabular}}}}%
    \put(0.9044693,0.16948802){\makebox(0,0)[lt]{\lineheight{1.25}\smash{\begin{tabular}[t]{l}$-$\end{tabular}}}}%
    \put(0,0){\includegraphics[width=\unitlength,page=2]{octahedron.pdf}}%
  \end{picture}%
\endgroup%

\caption{Two face-pairings along a segment.}
\label{fig.octahedron}
\end{figure}

When we glue octahedra, edges are identified only if they have the same type.
It follows that we can classify edges of $\calO_D$ into four types:
C, R, O and U-edges. In addition,
\begin{itemize}
\item a C-edge corresponds to a crossing of $D$;
\item an R-edge corresponds to a region of $D$, i.e., two R-edges
are identified if and only if they lie in the same region, viewed from the top;
\item an O-edge corresponds to an over-arc of $D$, i.e., two
O-edges are identified if and only if they are attached to the same over-arc;
\item a U-edge corresponds to an under-arc of $D$, i.e., two
U-edges are identified if and only if they are attached to the same under-arc. 
\end{itemize}
Note that if $D$ has $n$ crossings, then we have
\begin{equation*}
  \# ( \textrm{tetrahedra of } \calO_D) = 4n\, , \quad
  \# ( \textrm{edges of } \calO_D) = 4n+2
\end{equation*}
with $n$ C-edges, $(n+2)$ R-edges, $n$ O-edges and $n$
U-edges.

\subsection{Collapsing the octahedral decomposition to an ideal triangulation}  
\label{sec.collap}

In this section we discuss how to collapse the ideal cell decomposition $\calO_D$
of $S^3\setminus (K \cup \{p,q\})$ into an ideal triangulation $\calT_D$ of
$S^3\setminus K$. The collapsing depends on fixing an alternating segment $s$
of $D$. By alternating we mean that we overpass the diagram at one end
of $s$ and underpass at the other end. We denote by $r_1$ and $r_2$ two regions
adjacent to $s$, and by $o$ and $u$ the over-arc and under-arc containing
$s$, respectively (see Figure~\ref{fig.region2}).

Recall that the octahedral decomposition $\calO_D$ has four edges
corresponding to the regions $r_1$, $r_2$, the over-arc $o$, and the under-arc
$u$, respectively. We remove all tetrahedra of $\calO_D$ that contain
one of those four edges.
When viewed from the top, they are represented by triangles lying in $r_1$ or
$r_2$, and ones lying beside $o$ or $u$. After removing those triangles as well
as dashed lines in between, we add new dashed lines or arcs in an obvious way
so that every dashed line is connected, shown as in Figure~\ref{fig.region2}.
Then all faces of the remaining tetrahedra are paired along dashed lines
or arcs. This results in an collapsed ideal triangulation
of $S^3 \setminus K$.
As the pair $(D,s)$ is determined by the open diagram $\Ddot$ obtained by cutting
$D$ across $s$, we denote the collapsed triangulation simply by
$\calT_{\Ddot}$. Note that $s$ being alternating implies the technical
assumption $(\dagger)$ on $\Ddot$ in Section~\ref{sub.OT}.

\begin{figure}[htpb!]
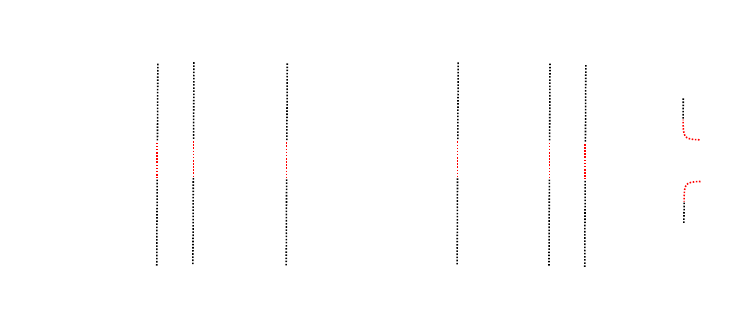
\caption{Modified face-pairings.}
\label{fig.region2}
\end{figure}

Suppose that the over-arc $o$, the under-arc $u$, and the regions $r_1$ 
and $r_2$ have $n_o$, $n_u$, $n_{r_1}$ and $n_{r_2}$ segments of
$D$, respectively. When $\calO_D$ collapses into
$\calT_{\Ddot}$, tetrahedra lying in $r_1$ or $r_2$, and ones lying beside $o$ or
$u$ disappear: total $(n_{r_1}+n_{r_2} + 4n_u+4n_o-8)$ disappear. 
Therefore, if $D$ has $n$ crossings, 
\begin{equation*}
\# ( \textrm{tetrahedra of } \calT_{\Ddot}) = 4n-n_{r_1}-n_{r_2}-4n_u-4n_o+8 \,.
\end{equation*}

On the other hand, when $\calO_D$ collapses into $\calT_{\Ddot}$,
edges of different types can be identified. In this case, we keep the bigger
type according to the following order.
\begin{equation*}
C<  R < O , U \,.
\end{equation*}
For instance, if a C-edge and an O-edge are identified, we call the identified
one an O-edge. Exceptionally, if an O-edge and a U-edge are identified, we
retain both types: the identified edge is both an O-edge and a U-edge. 

To describe how edges change explicitly, we denote the crossings of $u$
and $o$ by $c_1,\ldots,c_{n_u + 1}$ and $c_{n_u},\ldots, c_{n_u+n_o}$
respectively, in order so that the common ones, $c_{n_u}$ and
$c_{n_u+1}$, are the crossings of $s$.  We also denote a segment of $u$ or $o$ by
$[c_i,c_{i+1}]$ for $1 \leq i \leq n_o+n_u-1$. 
When $\calO_D$ collapses into $\calT_{\Ddot}$,
\begin{itemize}
\item $(n_u + n_o)$ C-edges disappear. These correspond to
  $c_1, \ldots, c_{n_u + n_o}$: more precisely, 
\subitem \textbf{(Ca)} the one corresponding to $c_1$ is identified with the
U-edge corresponding to the under-arc of $c_1$; 
\subitem \textbf{(Cb)} the one  corresponding to $c_{n_u+n_o}$ is identified
with the O-edge corresponding to the over-arc of  $c_{n_u+n_o}$; 
\subitem \textbf{(Cc)} the others disappear.  
\smallskip

\item $(n_u+n_o+2)$ R-edges disappear. Precisely, two R-edges
corresponding to regions adjacent to $[c_i,c_{i+1}]$ for 
$1 \leq i \leq n_u+n_o-1$: 
\subitem \textbf{(Ra)} disappear if $i=n_u$ (i.e., if $[c_i,c_{i+1}]=s$); 
\subitem \textbf{(Rb)} are identified with the O-edge corresponding to the
over-arc of $c_1$ if $i=1$;
\subitem \textbf{(Rc)} are identified with the U-edge corresponding to the under-arc
of $c_{n_u+n_o}$ if $i=n_u + n_o -1$;
\subitem \textbf{(Rd)} are identified to one R-edge, otherwise.  
\smallskip

\item $(n_u + n_o -3)$ O-edges disappear. Precisely,
\subitem \textbf{(Oa)} the O-edge corresponding to the over-arc containing
$[c_i, c_{i+1}]$  vanishes for $1 < i \leq n_u$; 
\subitem \textbf{(Ob)} two O-edges corresponding to adjacent over-arcs at
$c_i$ other than $o$ are identified for $n_u+1 < i  < n_{u} + n_{o}$. 
\smallskip

\item $(n_u + n_o -3)$ U-edges disappear. Precisely, 
\subitem \textbf{(Ua)} the U-edge corresponding to the under-arc containing
$[c_i, c_{i+1}]$ vanishes for $n_u < i \leq n_o+n_u$; 
\subitem \textbf{(Ub)} two U-edges corresponding to adjacent under-arcs at
$c_i$ other than $u$ are identified for $1 < i  < n_{u}$.
\end{itemize}
In addition, for each segment $s' \neq s$ lying in $r_1$ or $r_2$, an O-edge
corresponding to the over-arc containing $s'$ is identified with a U-edge
corresponding to the under-arc containing $s'$. This leads to
$(n_{r_1}+n_{r_2}-2)$ identifications between O-edges and U-edges. 
We refer to Appendix~\ref{sec.app} for details.  Therefore, if $D$ has $n$
crossings, 
\begin{align*}
\# ( \textrm{edges of } \calT_{\Ddot}) &= (4n + 2) - (n_u + n_o) - (n_u + n_o +2) \\
 & \quad \quad - (n_u + n _o -3) - (n_u + n _o -3) - (n_{r_1} + n_{r_2} -2 ) \\
&=  \, 4n  - n_{r_1} - n_{r_2}-4n_o -4n_u + 8 \, .
\end{align*}
This double-checks that the number of edges is equal to the number of tetrahedra.

\noindent
{\bf Example~\ref{ex.61} continued.} For the open diagram $\Ddot$ of
Figure~\ref{fig.example}, its closure $D$ is shown on the left of
Figure~\ref{fig.examplecc}. The diagram $D$ has six crossings, and thus the octahedral
decomposition $\calO_D$ has
six C-edges and eight R-edges, as well as six O-edges $o_1,\ldots,o_6$ and six
U-edges $u_1,\ldots , u_6$. It is convenient to label a segment of $D$
by $o_i/u_j$ where $o_i$ and $u_j$ correspond to the over and under-arcs containing
the segment, respectively.

We choose an alternating segment $s$ as the one with $o_6/u_1$ (the leftmost one).
Then, when $\calO_D$ collapses into the collapsed triangulation $\calT_{\Ddot}$,
\begin{itemize}
\item
  four C-edges adjacent to the segment with $o_1/u_1$, $o_6/u_1$, or $o_6/u_6$ disappear,
\item
  two R-edges adjacent to the segment with $o_6/u_1$ disappear,
\item
  two R-edges adjacent to the segment with $o_1/u_1$ are identified with $o_1$,
\item
  two R-edges adjacent to the segment with $o_6/u_6$ are identified with $u_6$,
\item $o_6$ and $u_1$ disappear.
\end{itemize} 
In addition, we obtain identifications
\begin{equation*}
  o_4 = u_4, \quad o_1 = u_2, \quad  o_2=u_2, \quad  o_5 = u_6,
  \quad o_3 = u_3, \quad o_4=u_5
\end{equation*}
from segments lying in the unbounded region of $D$.
It follows that the collapsed triangulation $\calT_{\Ddot}$ has two C-edges,
two R-edges, and four O or U-edges:
\begin{equation*}
\{(o_1 = o_2 = u_2), \ (o_3 = u_3), \ (o_4=u_4=u_5), \ (o_5 = u_6) \} \,.
\end{equation*}

To connect the 8 shape parameters of the collapsed triangulation $\calT_{\Ddot}$
with the 3 variables of the Yokota potential of Figure~\ref{fig.example}, we will
eliminate five of them by solving to the gluing equations of two C-edges  and two
R-edges, and completeness equation of the meridian; see Section~\ref{sec.reduction}
for details.

\begin{figure}[htpb!]
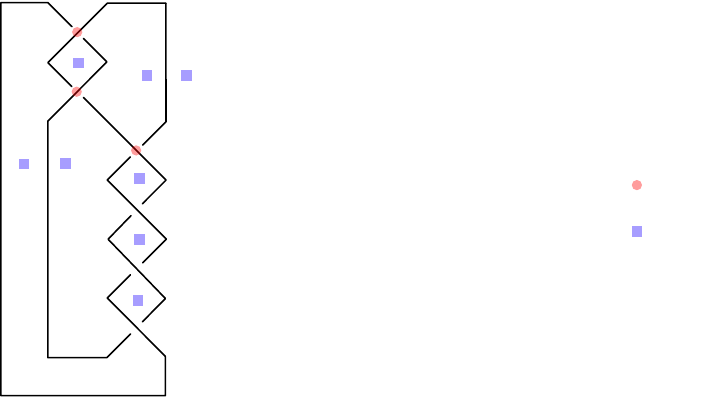
\caption{Edges of $\calO_D$ and $\calT_{\Ddot}$.}
\label{fig.examplecc}
\end{figure}


\section{Proof of the main theorem}
\label{sec.pf}

In this section we prove Theorem~\ref{thm.main2} below, which combined with the work
of Ohtsuki--Takata~\cite{OT2015} implies Theorem~\ref{thm.main}.
We fix an open diagram $\Ddot$ of a hyperbolic 2-bridge knot $K$ as in the
following figure, where boxes are twist regions.

\begin{figure}[htpb!]
\begingroup%
  \makeatletter%
  \providecommand\color[2][]{%
    \errmessage{(Inkscape) Color is used for the text in Inkscape, but the package 'color.sty' is not loaded}%
    \renewcommand\color[2][]{}%
  }%
  \providecommand\transparent[1]{%
    \errmessage{(Inkscape) Transparency is used (non-zero) for the text in Inkscape, but the package 'transparent.sty' is not loaded}%
    \renewcommand\transparent[1]{}%
  }%
  \providecommand\rotatebox[2]{#2}%
  \newcommand*\fsize{\dimexpr\f@size pt\relax}%
  \newcommand*\lineheight[1]{\fontsize{\fsize}{#1\fsize}\selectfont}%
  \ifx\svgwidth\undefined%
    \setlength{\unitlength}{234.78144368bp}%
    \ifx\svgscale\undefined%
      \relax%
    \else%
      \setlength{\unitlength}{\unitlength * \real{\svgscale}}%
    \fi%
  \else%
    \setlength{\unitlength}{\svgwidth}%
  \fi%
  \global\let\svgwidth\undefined%
  \global\let\svgscale\undefined%
  \makeatother%
  \begin{picture}(1,0.90840288)%
    \lineheight{1}%
    \setlength\tabcolsep{0pt}%
    \put(0,0){\includegraphics[width=\unitlength,page=1]{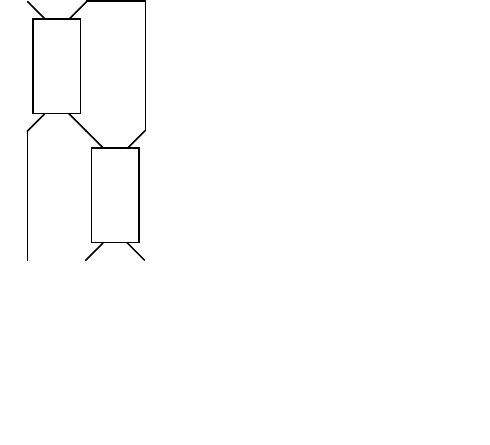}}%
    \put(0.16974862,0.30401226){\makebox(0,0)[lt]{\lineheight{1.25}\smash{\begin{tabular}[t]{l}$\vdots$\end{tabular}}}}%
    \put(0,0){\includegraphics[width=\unitlength,page=2]{planardiagram.pdf}}%
    \put(0.45579776,0.42491449){\makebox(0,0)[lt]{\lineheight{1.25}\smash{\begin{tabular}[t]{l}or\end{tabular}}}}%
    \put(0.04901309,0.30401226){\makebox(0,0)[lt]{\lineheight{1.25}\smash{\begin{tabular}[t]{l}$\vdots$\end{tabular}}}}%
    \put(0.29048415,0.30401226){\makebox(0,0)[lt]{\lineheight{1.25}\smash{\begin{tabular}[t]{l}$\vdots$\end{tabular}}}}%
    \put(0.78056874,0.30334388){\makebox(0,0)[lt]{\lineheight{1.25}\smash{\begin{tabular}[t]{l}$\vdots$\end{tabular}}}}%
    \put(0.65983319,0.30334388){\makebox(0,0)[lt]{\lineheight{1.25}\smash{\begin{tabular}[t]{l}$\vdots$\end{tabular}}}}%
    \put(0.90130416,0.30334388){\makebox(0,0)[lt]{\lineheight{1.25}\smash{\begin{tabular}[t]{l}$\vdots$\end{tabular}}}}%
  \end{picture}%
\endgroup%

\caption{An open diagram of a 2-bridge knot.}
\label{fig.planardiagram}
\end{figure}

\begin{theorem}
\label{thm.main2}
For $\Ddot$ as above, $\tau(\calT_{\Ddot})=\pm\omega(\Ddot)$. 
\end{theorem}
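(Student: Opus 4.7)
The plan is to unfold $\tau(\calT_{\Ddot})$ as an $N\times N$ Jacobian (Remark~\ref{rmk.der}), reduce it to an $n\times n$ Hessian via a Schur complement obtained by solving the C-edge, R-edge and meridian equations of $\calT_{\Ddot}$ for $N-n$ auxiliary variables, and then verify that the resulting scalar together with the flattening denominator matches the Ohtsuki--Takata normalization $\Omega_1\Omega_2$. Here $n$ denotes the number of Yokota variables of $\Ddot$ and $N$ the number of tetrahedra of $\calT_{\Ddot}$; Figure~\ref{fig.crossing2} supplies the dictionary expressing every shape parameter of $\calT_{\Ddot}$ as a ratio of segment labels, so that $G_\mu,G'_\mu,G''_\mu$ and the $\zeta^{\square}_j$ become explicit rational expressions in $(x_1,\ldots,x_n)$ together with one auxiliary variable per collapsed C- and R-edge.

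First I would fix an explicit parametrization adapted to the 2-bridge diagram of Figure~\ref{fig.planardiagram}. Within each twist region the segment labels obey a uniform three-term recursion coming from successive C-edge gluing equations, and the terminal segments are pinned to $0,1,\infty$ by the rules of Section~\ref{sub.PD} and by assumption~$(\dagger)$. This uniform structure allows me to solve in closed form the C-edge equations (one per surviving crossing), the two remaining R-edge equations, and the meridian completeness equation~\eqref{eqn.mu} for the auxiliary variables as explicit rational functions of $x_1,\ldots,x_n$. After substitution, each surviving shape $z_j^\circ,z_j^{\prime\circ},z_j^{\prime\prime\circ}$ becomes a simple monomial in the $x_i$'s or the complement $1$ minus such a monomial, matching exactly the arguments of $\li$ in Figure~\ref{fig.crossing}.

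Next I would compute the reduced Jacobian. Writing the matrix inside $\det$ in Remark~\ref{rmk.der} as a $2\times 2$ block with rows indexed by (C, R, $\mu$) versus (O, U) equations, and columns indexed by auxiliary versus Yokota variables, the full determinant factors as the determinant of the auxiliary block times the Schur complement. A crossing-by-crossing differentiation using Figures~\ref{fig.crossing} and~\ref{fig.crossing2} identifies this Schur complement with the Hessian $\det\!\bigl(x_i\,\pt_{x_i}(x_j\,\pt_{x_j}V)\bigr)_{1\leq i,j\leq n}$, which is precisely the determinant appearing in $\omega(\Ddot)$. Since $\tau(\calT_{\Ddot})$ is flattening-independent, I am free to choose $(f,f',f'')$; I would pick the flattening whose $\zeta$-monomial exponents match, at each crossing, cup and cap, the local exponents of $\Omega_1\Omega_2$ read off from Figures~\ref{fig.omega1} and~\ref{fig.omega2}. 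With this choice, the residual scalar formed by the determinant of the auxiliary block together with $\prod_j\zeta_j^{f_j}\zeta_j^{\prime\,f'_j}\zeta_j^{\prime\prime\,f''_j}$ collapses locally to $\Omega_1\Omega_2$, yielding $\tau(\calT_{\Ddot})=\pm\omega(\Ddot)$ and hence Theorem~\ref{thm.main2}.

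The main obstacle will be the \emph{local} factorization of the residual scalar as $\Omega_1\Omega_2$. On a general essential planar diagram the auxiliary block is not obviously solvable, and even when it is, the resulting monomial residue need not split as a product over crossings, cups and caps; this is one reason the argument does not extend beyond 2-bridge knots, as noted in Section~\ref{sec.intro}. For 2-bridge diagrams, however, the twist-region recursion introduces telescoping cancellations that I expect to reduce the matching to a short list of crossing/cup/cap identities, each a direct calculation from Figures~\ref{fig.omega1}--\ref{fig.omega2}. Once $\tau(\calT_{\Ddot})=\pm\omega(\Ddot)$ is proved, Theorem~\ref{thm.main} follows by combining it with the identification of $\omega(\Ddot)$ with the adjoint Reidemeister torsion for hyperbolic 2-bridge knots \cite[Thm.1.1]{OT2015}.
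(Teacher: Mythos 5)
Your proposal follows essentially the same route as the paper's proof: reduce the $N\times N$ Jacobian of Remark~\ref{rmk.der} by solving the C-edge, R-edge and meridian completeness equations for the auxiliary shapes (the paper's Lemma~\ref{lem.red}, applied repeatedly, is exactly your Schur-complement step), identify the reduced block with the Hessian of the Yokota potential via the dictionary of Figure~\ref{fig.crossing2} (Lemma~\ref{lem.cc}, quoting \cite[Thm.~4.1]{MY2018}), and absorb the residual monomial into $\Omega_1\Omega_2$ through a suitable flattening (Lemma~\ref{lem.ccc}). The one step you defer --- choosing a flattening whose $\zeta$-monomial matches $\Omega_1\Omega_2$ --- is where the paper concentrates most of its effort (Section~\ref{sec.flat}: the natural local choice fails to be a flattening at the four arcs $o_0,o_\infty,u_0,u_\infty$ and must be corrected using Lemma~\ref{lem.overunder}), so it is a genuine construction rather than the bookkeeping verification your plan suggests, but the overall strategy coincides with the paper's.
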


\subsection{Flattening of the collapsed triangulation}
\label{sec.flat}

In this subsection, we find an explicit flattening ${\boldsymbol f}=(f,f',f'')$
of $\calT_{\Ddot}$. Recall from Section~\ref{sec.OL} that ${\boldsymbol f}$ consists
of integer triples ${\boldsymbol f}_j=(f_j,f'_j,f''_j) \in \BZ^3$, one for each
tetrahedron $\Delta_j$ of $\calT_{\Ddot}$ and is required to satisfy
$f_j + f'_j +f''_j =1$ for each $\Delta_j$ and
\begin{equation}
\label{eqn.flat}
e_i : \quad \sum_{j} G_{ij} f_j +  G'_{ij} f'_j  + G''_{ij} f''_j  =2
\end{equation}
for each edge $e_i$. We denote by $\langle {\boldsymbol f}, e_i \rangle$
the left-hand side of Equation~\eqref{eqn.flat}.

We fix a quad type of each $\Delta_j$ so that $f_j$ is assigned to the vertical
and horizontal edges of $\Delta_j$ (the edges $\overline{0\infty}$ and
$\overline{xy}$ in Figure~\ref{fig.crossing2}).
Denoting $\Delta_j$ with a corner of $\Ddot$, we choose an integer triple
${\boldsymbol f}_j$ as follows.
\begin{equation}
\label{eqn.fl1}
{\boldsymbol f}_j  =
\begin{cases}
(1,0,0) & \textrm{for the N and S-corners of a crossing} \\
(0,1,0) & \textrm{for the E and W-corners of a positive crossing} \\
(0,0,1) & \textrm{for the E and W-corners of a negative crossing} \, .
\end{cases}
\end{equation}
Here N, E, S or W indicates the position of a corner with respect to the nearby
crossing. Note that each tetrahedron has two edges assigned with 1 and four edges
assigned with 0. When we compute  $\langle {\boldsymbol f}, e \rangle$, the ones
with 0 have no influence, and thus it suffices to consider the ones with $1$, which
are marked by red in Figure~\ref{fig.octahedronatd}.

\begin{figure}[htpb!]
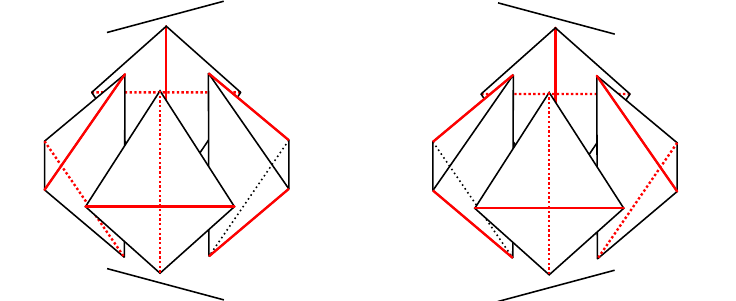
\caption{A choice of ${\boldsymbol f}$.}
\label{fig.octahedronatd}
\end{figure}

Recall from Section~\ref{sec.octa} that $\calT_{\Ddot}$ has four edge-types: C, R, O
and U-edges. We first compute $\langle {\boldsymbol f}, e \rangle$ for C and R-edges.  
As these edges recieve contributions only from $f_j$'s (not from $f'_j$'s and
$f''_j$'s),  only N and S-corners are involved in the computation.
For simplicity we call a segment with $\kappa \in \{ 0,1,\infty\}$ assigned a
$\kappa$-segment.

\begin{lemma} 
\label{lem.C}
We have $\langle {\boldsymbol f}, e \rangle=2$ for all C-edges $e$.
\end{lemma}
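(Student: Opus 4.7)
The plan is to fix a C-edge $e$ of $\calT_{\Ddot}$ and tally the contributions to $\langle \boldsymbol f, e \rangle$ from the tetrahedra containing $e$, showing that exactly two of them contribute $1$ each while the others contribute $0$. By the collapse rules (Ca)--(Cc) of Section~\ref{sec.collap}, a surviving C-edge $e$ corresponds to a crossing $c$ of $D$ that does \emph{not} lie on the over-arc $o$ or the under-arc $u$ containing $s$. Moreover, none of the other identification rules (Ra)--(Rd), (Oa)--(Ub) in Section~\ref{sec.collap}, nor the O=U identifications coming from segments in $r_1 \cup r_2$, glues an edge of a different type to a surviving C-edge. Hence the tetrahedra of $\calT_{\Ddot}$ containing $e$ form a subset of the four corner-tetrahedra of the octahedron at $c$, where the C-edge appears in each as the diagonal $\overline{0\infty}$.

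Next, I compute the per-corner contribution. With our chosen quad type $z_j$ is placed on $\overline{0\infty}$, so each corner-tetrahedron $\Delta_j$ at $c$ satisfies $G_{e,j}=1$ and $G'_{e,j}=G''_{e,j}=0$, contributing $f_j$ to $\langle \boldsymbol f, e \rangle$. By the flattening choice~\eqref{eqn.fl1}, $f_j=1$ at the N and S corners and $f_j=0$ at the E and W corners regardless of the sign of the crossing; this is exactly what the text means by saying only N and S corners are involved in the computation.

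Finally, I verify that both the N and S corner-tetrahedra of $c$ survive the collapse. They cannot be removed for containing the O-edge of $o$ or the U-edge of $u$, since $c \notin o \cup u$. They also cannot be removed for lying in $r_1$ or $r_2$: a careful look at the 2-bridge open diagram of Figure~\ref{fig.planardiagram} shows that the only crossings having a corner in $r_1 \cup r_2$ are the two endpoints of $s$, which themselves lie on $o$ and $u$ and whose C-edges have already been excluded. Combining these facts yields $\langle \boldsymbol f, e \rangle = 1 + 1 = 2$.

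The main obstacle is the last step, where one must use the planar geometry of the specific 2-bridge open diagram to rule out that any surviving crossing has its N or S corner inside $r_1$ or $r_2$. Once this geometric verification is done, the lemma follows immediately by reading off the flattening contributions~\eqref{eqn.fl1} at the N and S corners; the remaining two corners contribute $0$ by construction.
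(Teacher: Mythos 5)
Your overall skeleton is the same as the paper's: a surviving C-edge receives contributions only through the $f$-components (the edge $\overline{0\infty}$ carries the parameter $z_j$), so only the N and S corners can contribute, and by~\eqref{eqn.fl1} each essential N or S corner contributes exactly $1$ while E and W corners contribute $0$. The steps identifying which C-edges survive ((Ca)--(Cc)) and computing the per-corner contribution are fine.

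The gap is in your final verification. The claim that \emph{``the only crossings having a corner in $r_1 \cup r_2$ are the two endpoints of $s$''} is false, and it contradicts the paper's own bookkeeping: the collapse removes $n_{r_1}+n_{r_2}$ tetrahedra lying in $r_1\cup r_2$, which for the $6_1$ diagram is $8$ corners --- far more than can sit at two crossings. Indeed one of $r_1,r_2$ is typically the unbounded region, which borders most of the crossings of the standard $2$-bridge diagram. What saves the lemma is a weaker statement: at a crossing whose C-edge survives, the corners of $r_1\cup r_2$ (and the tetrahedra removed for lying beside $o$ or $u$) only ever occur at the E and W positions, never at N or S. The paper gets this via the dictionary between removed tetrahedra and non-essential corners: a corner is non-essential precisely when an adjacent segment carries $0$ or $\infty$, or both adjacent segments carry $1$; a surviving C-edge corresponds to a crossing with no adjacent $0$- or $\infty$-segments, and in the diagram of Figure~\ref{fig.planardiagram} the two segments at an N or S corner of such a crossing are never both $1$-segments (at most one of them borders the unbounded region). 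You need to replace your geometric claim with this (or an equivalent direct check that N/S corners of surviving crossings are bigon-type corners inside twist regions); as written, the justification of the key survival step does not hold.
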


\begin{proof}
Recall that a C-edge corresponds to a crossing with no adjacent $0$ and
$\infty$-segments. Thus a corner around it is essential, unless adjacent segments
are both 1-segments. Given that $\Ddot$ is given as in Figure~\ref{fig.planardiagram},
both the N and S-corners are essential and contribute 1 to
$\langle {\boldsymbol f}, e \rangle$. This proves that
$\langle {\boldsymbol f}, e \rangle=2$. 
\end{proof}

\begin{lemma} 
\label{lem.R}
We have $\langle {\boldsymbol f}, e \rangle=2$ for all R-edges $e$.
\end{lemma}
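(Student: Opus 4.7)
The plan is to mimic the proof of Lemma~\ref{lem.C}: identify all essential corners of $\Ddot$ that contribute to $\langle {\boldsymbol f},e\rangle$ for a given R-edge $e$ of $\calT_{\Ddot}$, and show that the contributions sum to $2$. By the choice of flattening in~\eqref{eqn.fl1}, a tetrahedron $\Delta_j$ at a corner of $\Ddot$ contributes nonzero to $\langle{\boldsymbol f},e\rangle$ for an R-edge $e$ only through its $z$-edges, and in the octahedral decomposition $\calO_D$ the R-edge of $\Delta_j$ is a $z$-edge; so the contribution is $1$ precisely when the corner is an essential N or S corner. Thus the claim reduces to a combinatorial count of essential N/S corners that get identified with $e$ under the collapse.

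Next I would trace each R-edge of $\calT_{\Ddot}$ back to its preimage in $\calO_D$ using the collapse rules (Ra)--(Rd) of Section~\ref{sec.collap}. Such an R-edge comes either from a single region of $\Ddot$ disjoint from the arcs $o$ and $u$, or from a pair of regions on opposite sides of some interior segment of $o\cup u$ that are glued by rule (Rd); regions adjacent to $s$ are eliminated by (Ra), and those adjacent to the two endpoint segments of $o\cup u$ are absorbed into O- or U-edges by (Rb) and (Rc), so they contribute nothing to the R-edges of $\calT_{\Ddot}$.

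Then, for each surviving R-edge of $\calT_{\Ddot}$, I would tally the essential N/S corners of the associated region (or pair of regions) using the specific geometry of the standard 2-bridge diagram of Figure~\ref{fig.planardiagram}. The key local observation is that, in this diagram, the bigon between two adjacent crossings of a single twist region contributes exactly two essential N/S corners (one from each bounding crossing, as in the proof of Lemma~\ref{lem.C}), and that each pair of regions identified by (Rd) across an interior segment of $o\cup u$ also contributes exactly two in total. Essentiality of these corners follows, as for Lemma~\ref{lem.C}, from assumption~$(\dagger)$ and the placement of $0$-, $1$- and $\infty$-segments in the standard 2-bridge diagram, which guarantees that the relevant N/S corners never sit between two $1$-segments or next to a $0$- or $\infty$-segment.

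The main obstacle is carrying out the case analysis uniformly: the surviving R-edges of $\calT_{\Ddot}$ split into several combinatorial families depending on their position within the twist-region structure of Figure~\ref{fig.planardiagram}, and the (Rd) identifications glue regions that can have quite different numbers of essential corners of the other types. I anticipate that the cleanest route is by induction on the number of twist regions (equivalently, on the length of the continued fraction representation of the 2-bridge knot), reducing at each step to a small number of prototypical local pictures whose N/S counts can be verified by hand, and using the recursive structure of the 2-bridge diagram to propagate the statement from one twist region to the next.
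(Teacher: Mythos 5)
Your reduction is the right one and is the same as the paper's: with the flattening~\eqref{eqn.fl1}, the R-edge of the tetrahedron at a corner carries the coordinate $f_j$, which is $1$ exactly for N- and S-corners, so $\langle\boldsymbol f,e\rangle$ is the number of essential N/S corners whose R-edge is identified with $e$. The problem is that the proposal never actually computes this number. All of the content is deferred to an induction on twist regions whose ``prototypical local pictures'' are neither listed nor verified, and the one concrete count you do assert --- that a pair of regions glued by rule (Rd) contributes exactly $2$ in total --- is given no justification. (It is also vacuous here: in the standard alternating $2$-bridge diagram of Figure~\ref{fig.planardiagram} every over- and under-arc has exactly two segments, so $n_o=n_u=2$, the segments of $o\cup u$ are exhausted by rules (Ra), (Rb), (Rc) of Section~\ref{sec.collap}, and (Rd) never fires; every surviving R-edge comes from a single region of $\Ddot$.)

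The missing idea, which makes the whole case analysis unnecessary, is the uniform observation the paper uses (see Figure~\ref{fig.octahedronatd}): for any region, the only N/S corners lying in it are the S-corner of its topmost crossing and the N-corner of its bottommost crossing; at every other crossing on its boundary the region occupies an E- or W-corner, whose triple $(0,1,0)$ or $(0,0,1)$ contributes nothing to an R-edge. Both distinguished corners are essential for the diagrams of Figure~\ref{fig.planardiagram} (as in the proof of Lemma~\ref{lem.C}), so $\langle\boldsymbol f,e\rangle=1+1=2$ for every R-edge at once, with no induction and no family-by-family count. If you insist on your route you would still have to prove, for each of your families, that no N/S corner other than these two meets the region --- which is precisely the uniform statement above, so the induction buys you nothing.
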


\begin{proof}
Let us consider a region of $\Ddot$ that corresponds to an R-edge $e$. The S-corner of
the topmost crossing contributes 1 to  $\langle {\boldsymbol f}, e \rangle$. Similarly,
the N-corner of the bottommost crossing contributes 1 to
$\langle {\boldsymbol f}, e \rangle$. The other crossings that are neither top nor
bottom ones have no contribution; see Figure~\ref{fig.octahedronatd}. This proves that
$\langle {\boldsymbol f}, e \rangle=2$. 
\end{proof}

We now compute $\langle \boldsymbol f  , e \rangle$ for O and U-edges. Recall that O
and U-edges correspond to over and under-arcs of $\Ddot$, respectively. In what follows,
for simplicity we confuse an over or under-arc of $\Ddot$ with the corresponding O or
U-edge of $\calT_{\Ddot}$. In addition, we assume that starting from the top endpoint of
$\Ddot$, we underpass the first crossing (thus from the bottom endpoint, we overpass the
first crossing). The opposite case can be proved similarly by exchanging the roles of
$\infty$ and  $0$. 

The diagram $\Ddot$ has one cap, and we choose one crossing $c_\infty$ and one segment
$s_\infty$ around it shown as in Figure~\ref{fig.infty}. Similarly, we choose one
crossing $c_0$ and one segment $s_0$ around the cup of $\Ddot$ as in
Figure~\ref{fig.infty}. We denote by $o_\kappa$ and $u_\kappa$ for $\kappa = 0,\infty$
the over and under-arcs containing the segment $s_\kappa$, respectively.

\begin{figure}[htpb!]
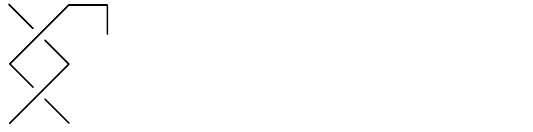
\caption{Cap and cup of $\Ddot$.}
\label{fig.infty}
\end{figure}

\begin{lemma} 
\label{lem.O}
For an O or U-edge $e$, we have
\begin{equation*}
\langle \boldsymbol f, e \rangle
=\begin{cases}
3 & \textrm{if } e =  u_\infty \textrm{ or } o_0 \,,\\
1 & \textrm{if } e = o_\infty \textrm{ or }  u_0 \,,\\	
2 & \textrm{otherwise} \,.
\end{cases}
\end{equation*}
\end{lemma}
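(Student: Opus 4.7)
The plan is to compute $\langle\boldsymbol f,e\rangle$ directly, exploiting two structural facts about an essential corner tetrahedron $\Delta_j$ together with the flattening~\eqref{eqn.fl1}. First, for an N- or S-corner, $\boldsymbol f_j=(1,0,0)$ and the $z_j$-edges of $\Delta_j$ are the C- and R-edges of the octahedron at the crossing; such a corner therefore contributes $0$ to a pure O- or U-edge of $\calT_{\Ddot}$ and contributes $1$ for each C- or R-edge of $\Delta_j$ that is absorbed into $e$ via the collapsing of Section~\ref{sec.collap}. Second, for an E- or W-corner, $\boldsymbol f_j=(0,1,0)$ or $(0,0,1)$, and each of the $z'_j$- and $z''_j$-edge pairs is mixed, consisting of exactly one O-type and one U-type non-equatorial edge; such a corner therefore contributes $f'_j+f''_j=1$ to the O- or U-edge of the octahedron at the crossing, whenever the relevant incidence holds.

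Using these observations, the generic case is immediate: for an over- or under-arc not containing $s_\infty$ or $s_0$, the alternating 2-bridge structure of $\Ddot$ provides exactly one interior over- or under-crossing of the arc, at which all four corners are essential. The two E- and W-corners contribute $1$ each, the two N- and S-corners contribute $0$, and there is no absorption of C- or R-edges, yielding $\langle\boldsymbol f,e\rangle=2$.

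The four special arcs $o_\infty, u_\infty, o_0, u_0$ are then handled by a careful local analysis near $c_\infty$ and $c_0$, which must simultaneously track: (i) the non-essentiality of corners adjacent to the $\infty$- and $0$-labeled segments $s_\infty, s_0$, per the criterion of Section~\ref{sub.PD}; (ii) the absorption of C-edges via the identifications~(Ca), (Cb) and of R-edges via (Rb), (Rc) from Section~\ref{sec.collap}; and (iii) the identifications of O- with U-edges arising from outer-region segments $s'\in r_1\cup r_2$. The short arcs $o_\infty$ and $u_0$ have no interior crossings but still acquire exactly one essential-corner contribution through these identifications, yielding $\langle\boldsymbol f,e\rangle=1$; the long arcs $u_\infty$ and $o_0$ receive the standard $2$ from their unique interior crossing plus an additional $+1$ coming from an absorbed C- or R-edge at the nearby endpoint crossing, yielding $3$. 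The main obstacle is this final step: the three effects above coincide at $c_\infty$ and $c_0$, and careful bookkeeping is needed to verify that their combination produces the asymmetric values $1$ and $3$ as claimed.
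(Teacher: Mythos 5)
Your generic-case bookkeeping rests on a combinatorial claim that is false, and the exceptional cases -- which are the actual content of the lemma -- are deferred rather than proved.

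First, the claim that ``each of the $z'_j$- and $z''_j$-edge pairs is mixed, consisting of exactly one O-type and one U-type non-equatorial edge'' does not hold. The face-pairings of $\calO_D$ identify edges of the same type, and along a segment of an alternating diagram they glue an \emph{upper} face of one octahedron to a \emph{lower} face of its neighbour (the $+/-$ signs in Figure~\ref{fig.octahedron}). Tracing such a gluing on the vertices forces the top vertex of one octahedron to be matched with the bottom vertex of the other, and type-consistency then forces the O-edges of a corner tetrahedron $\{0,\infty,v_x,v_y\}$ to be a pair of \emph{opposite} edges (the upper edge toward the under-segment together with the lower edge toward the over-segment), and likewise for the U-edges. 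So one of the quad pairs carrying $f'_j$ or $f''_j$ consists of two O-edges and the other of two U-edges; neither is mixed. Your conclusion that an E- or W-corner ``contributes $f'_j+f''_j=1$'' to a given O- or U-edge therefore does not follow, and with it your attribution of the generic value $2$ to the two E/W-corners of the arc's unique \emph{interior} crossing. The paper's accounting is different: a generic O- or U-edge receives one contribution from the crossing where the corresponding arc \emph{starts} and one from where it \emph{ends} (its two terminal crossings), not from its interior crossing.

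This discrepancy is not cosmetic, because the exceptional values $1$ and $3$ are produced exactly by what happens at terminal crossings near $c_\infty$ and $c_0$: the edge $o_\infty$ drops to $1$ because the E/W-corner at its \emph{start} crossing $c_\infty$ is non-essential (its adjacent segments carry $\infty$), and $u_\infty$ rises to $3$ because the C-edge at $c_\infty$ is identified with $u_\infty$ under the collapse (Ca) and that C-edge retains exactly \emph{one} contribution, from the single essential N/S-corner of $c_\infty$. Your description of the special cases is inconsistent with this: $o_\infty$ is an over-arc of the alternating closed diagram and so does have an interior crossing, and you never verify that the absorbed C-edge contributes $1$ rather than the generic $2$ of Lemma~\ref{lem.C} (which would give $4$, not $3$). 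More importantly, you explicitly leave the decisive computation as ``careful bookkeeping'' to be done; since the values $3$ and $1$ for $u_\infty, o_0, o_\infty, u_0$ are precisely what the lemma asserts beyond the generic case, this is a genuine gap rather than an omitted routine verification.
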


\begin{proof}
Following the diagram from the top endpoint, we encounter all over and under-arcs.
Thus for each O and U-edge we can think of a crossing where it starts or ends.
Except for the four edges $u_0,u_\infty, o_0$ and $o_\infty$, every O and U-edge
$e$, $\langle \boldsymbol f \, , e \rangle$ receives one from the crossing where it
starts and another one from where it ends; see Figure~\ref{fig.octahedronatd}. This
proves that $\langle \boldsymbol f \, , e \rangle=2$. Note that this includes the
case when an O-edge and a U-edge are identified. Such identification happens when
they meet at the W or E-corner lying in the unbounded region, and the fact that we
do not have a tetarhedron at that corner preserves the fact
$\langle \boldsymbol f \, , e \rangle=2$.

For $e = u_\infty$,  it receives one from the crossing where it starts and another
one from where it ends, as before. However, as the face-gluing around $c_\infty$ is
different from other crossings, i.e., as the vertical edge at $c_\infty$ is identified
with $u_\infty$, it receives one additional contribution from $c_\infty$.

It follows that $\langle \boldsymbol f \, , e \rangle=3$. We deduce the same for
$e=u_0$ similarly.
	
For $e = o_\infty$, $\infty$-segments make corners around the crossing where  it starts
not essential. This results in a tetrahedron being missing at the E or W-corner of its
start crossing, and thus $ \langle \boldsymbol f \, , e \rangle =1 $. We deduce the
same for $e =u_0$ similarly.
\end{proof}

\begin{lemma}
\label{lem.overunder}
Let $e_i$ and $e_j$ be two edges of $\calT_{\Ddot}$ that correspond to consecutive under
or over-arcs. Then there is a triple $\boldsymbol g=(g,g', g'')$ of vectors such
that $g_j + g'_j + g_j''=0$ and
\begin{equation*}
\langle {\boldsymbol f + \boldsymbol g} ,  e \rangle -\langle {\boldsymbol f}, e \rangle
=\begin{cases}
-1 & \textrm{if } e = e_i \\
1 & \textrm{if } e = e_j \\
0 & \textrm{otherwise}\, .
\end{cases}
\end{equation*}
In addition, such $\boldsymbol g$ can be chosen independently of $\boldsymbol f$ .
\end{lemma}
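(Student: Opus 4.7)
The goal is to exhibit an integer triple $\boldsymbol g=(g,g',g'')$ with $g_j+g'_j+g''_j=0$ at every tetrahedron $\Delta_j$ and with edge shift $\langle\boldsymbol g,e\rangle$ equal to $-1$ at $e_i$, $+1$ at $e_j$, and $0$ otherwise. The plan is to build $\boldsymbol g$ locally, supported on the cluster of tetrahedra around the unique crossing $c$ where the two consecutive under-arcs meet, and then check that the residual R-edge perturbations cancel thanks to the collapsing identifications of Section~\ref{sec.collap}. Two consecutive under-arcs $u=e_i$ and $u'=e_j$ meet at a unique over-crossing $c$ (the strand passes from $u$ to $u'$ as the over-strand at $c$). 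The four tetrahedra $\Delta_{NE},\Delta_{NW},\Delta_{SE},\Delta_{SW}$ at the four corners of $c$ all survive the collapse from $\calO_D$ to $\calT_{\Ddot}$; in $\calT_{\Ddot}$ the west-side U-edges $\mathrm{Top}\text{-}W$ and $\mathrm{Bottom}\text{-}W$ (contained in $\Delta_{NW}$ and $\Delta_{SW}$) are identified with $u$, while the east-side U-edges are identified with $u'$. The consecutive over-arc case is completely analogous, swapping the roles of O- and U-edges and of the constants $0$ and $\infty$.

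Using the quad type convention of Section~\ref{sec.flat}, $g_X$ occupies the C-edge of $c$ and the R-edge at corner $X$ of $\Delta_X$, while $g'_X$ and $g''_X$ together fill the two opposite (O-edge, U-edge) pairs. A direct calculation in the spirit of Lemmas~\ref{lem.C}--\ref{lem.O} shows that for any $\boldsymbol g$ supported on those four tetrahedra with $g_X+g'_X+g''_X=0$,
\begin{equation*}
\langle\boldsymbol g,u\rangle=-g_{NW}-g_{SW},\quad\langle\boldsymbol g,u'\rangle=-g_{NE}-g_{SE},
\end{equation*}
together with $\langle\boldsymbol g,o_N\rangle=-g_{NE}-g_{NW}$, $\langle\boldsymbol g,o_S\rangle=-g_{SE}-g_{SW}$, $\langle\boldsymbol g,\text{C-edge of }c\rangle=\sum_X g_X$, and $\langle\boldsymbol g,R_X\rangle=g_X$. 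Choosing $g_{NE}=-1$, $g_{NW}=+1$, $g_{SE}=g_{SW}=0$, and splitting $g'_X+g''_X=-g_X$ in any integer way (e.g.\ $g'_X=-g_X$, $g''_X=0$), yields the desired shifts $-1$ at $u$ and $+1$ at $u'$, zero shift on the C-edge and on both O-edges at $c$, and spurious shifts $-1,+1$ at the two R-edges $R_{NE},R_{NW}$ of $c$.

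The last step is to absorb those two R-edge shifts. For the 2-bridge diagram of Figure~\ref{fig.planardiagram}, the two corner regions north of an over-crossing $c$ are bounded by the common over-strand through $c$ and are pieces of a single canonical region of $\Ddot$: the unbounded region above a bridge for crossings on a bridge, or the same twist-side region for crossings inside a twist box. Combining this with the R-edge identification rules (Rb)--(Rd) of Section~\ref{sec.collap} near the cut segment $s$, one verifies case by case that $R_{NE}$ and $R_{NW}$ at $c$ are identified in $\calT_{\Ddot}$, so the shifts $\mp 1$ cancel automatically. In the exceptional configurations where the north-side identification fails (for instance when the over-arc through $c$ is adjacent to $s$), the mirror choice $g_{SE}=-1,g_{SW}=+1$ on the south side produces the required $\boldsymbol g$. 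Since every choice above is dictated by the combinatorial location of $c$ in $\Ddot$ alone, $\boldsymbol g$ may be taken independently of $\boldsymbol f$.

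The main obstacle is precisely this last case analysis: showing that at every over-crossing $c$ at least one of the north or south R-edge pairs of $c$ is identified in $\calT_{\Ddot}$. This is where the 2-bridge hypothesis enters essentially, since for a general open diagram the regions at the corners of $c$ need not enjoy such identifications.
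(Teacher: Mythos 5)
Your construction has a genuine gap, and it is exactly the step you flag as ``the main obstacle''. You perturb the first coordinate of the triples ($g_{NE}=-1$, $g_{NW}=+1$), which under the quad-type convention of Section~\ref{sec.flat} is the coordinate sitting on the C-edge and the R-edge of each corner tetrahedron. This forces you to produce spurious $\mp 1$ contributions on two R-edges of the crossing and then to claim that those two R-edges are identified in $\calT_{\Ddot}$ so that the contributions cancel. That claim is not proved in your write-up, and it is false in general: the corners of a crossing lie in up to four distinct regions of the diagram (e.g., for a crossing in the interior of a twist region the two ``bigon'' regions above and below it and the two side regions are pairwise distinct), and the collapsing identifications (Rb)--(Rd) only merge R-edges along the arcs $o$, $u$ adjacent to the cut segment $s$, not at an arbitrary crossing. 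Your fallback (``use the south pair instead'') does not repair this, since generically neither the north pair nor the south pair of regions is identified. Note also that the lemma is stated and used (including in Appendix~\ref{sec.appb} for braid closures) without any 2-bridge hypothesis, so an argument that leans on the 2-bridge structure of Figure~\ref{fig.planardiagram} would prove a weaker statement than the one needed.

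The paper's proof sidesteps all of this by choosing $\boldsymbol g$ with \emph{zero} first coordinate: the two corners at the common crossing receive $(0,-1,1)$ (under-arc case) or $(0,1,-1)$ (over-arc case), as in Figure~\ref{fig.crossing4}. Since C- and R-edges only receive contributions from the first coordinate, they are untouched automatically, and the $\pm 1$'s in the second and third coordinates land on the O/U-edges, where the contributions to the arcs other than $e_i,e_j$ cancel in pairs. I would also double-check your local bookkeeping: the identity $\langle\boldsymbol g,u\rangle=-g_{NW}-g_{SW}$ presumes that both the $z'$-pair and the $z''$-pair of edges of each west tetrahedron are identified with $u$, whereas each corner tetrahedron has two O-edges and two U-edges distributed between those pairs, so $g'_X$ and $g''_X$ do not in general both feed into the same U-edge. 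To repair your proof, replace your choice of $\boldsymbol g$ by one supported in the second and third coordinates only; the independence from $\boldsymbol f$ is then immediate, as in the paper.
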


\begin{proof}
Let $a_1$ and $a_2$ be the over or under-arcs corresponding to the edges $e_i$ and
$e_j$, respectively. As they are adjacent, there is a common crossing. If $a_1$ and
$a_2$ are under-arcs,  then we define $\boldsymbol g$ by assigning integer triples to
the corners at the common crossing as in Figure~\ref{fig.crossing4} (left); if $a_1$
and $a_2$ are over-arcs, then as in Figure~\ref{fig.crossing4} (right). Then one can
easily check that $\boldsymbol g$ satisfies the desired condition.

\begin{figure}[htpb!]
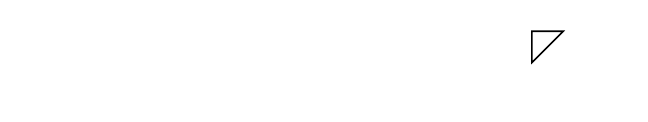
\caption{A choice of $\boldsymbol g$.}
\label{fig.crossing4}
\end{figure}
\end{proof}

\begin{remark}
\label{rmk.ratio}
In the proof of Lemma~\ref{lem.overunder}, we used two tetrahedra that are below
the arcs $a_1$ and $a_2$, but one may use two tetrahedra that are above the arcs. This
implies that a choice of $\boldsymbol g$ is not unique. However, regardless of a
choice of $\boldsymbol g$, the product 
\begin{equation*}
{\boldsymbol \zeta}^{\boldsymbol g}:=
\prod_{j}\zeta_j^{g_j} {\zeta_j'}^{g_j'} {\zeta_j''}^{g_j''}
\end{equation*}
is invariant: if there are two choices $\boldsymbol g_1$ and $\boldsymbol g_2$, then
for any flattening $\boldsymbol f$ of $\calT_{\Ddot}$,
$\boldsymbol f + \boldsymbol g_1 - \boldsymbol g_2$ is also a flattening, and thus
\begin{equation*}
  {\boldsymbol \zeta}^{\boldsymbol f + \boldsymbol g_1 -\boldsymbol g_2} =
  {\boldsymbol \zeta}^{\boldsymbol f}  \quad \Rightarrow \quad
  {\boldsymbol \zeta}^{\boldsymbol g_1 }= {\boldsymbol \zeta}^{\boldsymbol g_2} \,.
\end{equation*}
Explicitly, ${\boldsymbol \zeta}^{\boldsymbol g}$ agrees with $z_1z_2$ for
Figure~\ref{fig.crossing3}~(left) and with $1/(z_1z_2)$ for Figure
~\ref{fig.crossing3}~(right) where $z_1$ and $z_2$ are shape parameters placed at
the corners shown as in Figure~\ref{fig.crossing3}.
\end{remark}

Roughly speaking, Lemma ~\ref{lem.overunder} allows us to take one contribution
from an arc, either over or under, and give it to the next arc, and thus to any arc
by applying the lemma multiple times. We take one contribution from $o_0$ (resp.,
$u_\infty$) and give it to $o_\infty$ (resp., $u_0$). This results in a triple
$\boldsymbol g$ such that (see Lemma~\ref{lem.O})
\begin{equation*}
\langle \boldsymbol f+\boldsymbol g , e \rangle  = 2
\end{equation*}
for all edges $e$. Namely, $\boldsymbol f+\boldsymbol g $ is a flattening of
$\calT_{\Ddot}$.

\subsection{Variable reduction}
\label{sec.reduction}

In this subsection, we reduce the number of shape parameters by solving the gluing
equations for the C and R-edges, and the completeness equation. The following lemma
will be used repeatedly.

\begin{lemma}
\label{lem.red}
Let $f_1,\ldots,f_k$ be functions in variables $x_1,\ldots,x_k$ and fix a solution to 
\begin{equation*}
\exp(f_1) = \cdots = \exp(f_k)=1\,.
\end{equation*} Suppose that $\exp(f_k)$ is of the form $g(x_1,\ldots,x_{k-1}) \, x_k$
so that we can substitute $x_k$ with $1/g(x_1,\ldots,x_{k-1})$ by solving $\exp (f_k)=1$.
Then we have
\begin{equation}
\label{eqn.xx}
  \det \left ( \dfrac{\partial (f_1,\ldots,f_k)}{\partial(x_1,\ldots,x_k)} \right)
  = \dfrac{1}{x_k}
\det \left ( \dfrac{\partial (f'_1,\ldots,f'_{k-1})}{\partial(x_1,\ldots,x_{k-1})} \right)
\end{equation}
where $f'_i$ is obtained from $f_i$ by  substituting $x_k$ with $1/g(x_1,\ldots,x_{k-1})$.
\end{lemma}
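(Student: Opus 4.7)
The plan is to prove the identity by performing column operations on the $k \times k$ Jacobian matrix $J := \partial(f_1,\ldots,f_k)/\partial(x_1,\ldots,x_k)$ so as to make it block upper triangular, with the upper-left $(k-1) \times (k-1)$ block equal to $\partial(f_1',\ldots,f_{k-1}')/\partial(x_1,\ldots,x_{k-1})$ and the lower-right entry equal to $1/x_k$. The chain rule supplies the bridge between the two Jacobians, and a single cofactor expansion finishes the job.

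First, I would unpack the hypothesis. Since $\exp f_k = g(x_1,\ldots,x_{k-1})\, x_k$, we may write (on a suitable branch) $f_k = \log g(x_1,\ldots,x_{k-1}) + \log x_k$, whence $\partial f_k/\partial x_k = 1/x_k$ and $\partial f_k/\partial x_j = (\partial g/\partial x_j)/g$ for $j < k$. Setting $h := 1/g$, a direct calculation gives $\partial h/\partial x_j = -(\partial g/\partial x_j)/g^2 = -h \cdot (\partial f_k/\partial x_j)$. At the substitution point where $x_k = h(x_1,\ldots,x_{k-1})$, this reads $\partial h/\partial x_j = -x_k \cdot (\partial f_k/\partial x_j)$. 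Combined with the chain rule applied to $f_i'(x_1,\ldots,x_{k-1}) = f_i(x_1,\ldots,x_{k-1},h(x_1,\ldots,x_{k-1}))$, this yields
\begin{equation*}
\frac{\partial f_i'}{\partial x_j} \;=\; \frac{\partial f_i}{\partial x_j} \;-\; x_k \cdot \frac{\partial f_k}{\partial x_j} \cdot \frac{\partial f_i}{\partial x_k}
\end{equation*}
evaluated at $x_k = h$.

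Second, I would perform the following column operation on $J$: for each $j = 1,\ldots,k-1$, subtract $x_k \cdot (\partial f_k/\partial x_j)$ times the $k$-th column from the $j$-th column. These elementary operations do not change $\det J$. Because the $(k,k)$-entry of $J$ is $1/x_k$, the new $(k,j)$-entry (for $j < k$) becomes $0$, while the new $(i,j)$-entry (for $i < k$) becomes $\partial f_i/\partial x_j - x_k(\partial f_k/\partial x_j)(\partial f_i/\partial x_k)$, which matches $\partial f_i'/\partial x_j$ by the first step. Hence the modified matrix is block upper triangular with last row $(0,\ldots,0,1/x_k)$ and upper-left $(k-1)\times(k-1)$ block equal to $\partial(f_1',\ldots,f_{k-1}')/\partial(x_1,\ldots,x_{k-1})$, and cofactor expansion along the last row produces~\eqref{eqn.xx}.

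I do not expect any real obstacle: the argument is a routine chain-rule computation paired with a standard determinant manipulation. The only point meriting attention is keeping the evaluation at $x_k = 1/g$ consistent throughout, which is precisely what produces the pivot $1/x_k$ in the formula.
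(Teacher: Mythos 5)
Your proof is correct and follows essentially the same route as the paper's: both exploit the form $\exp(f_k)=g\,x_k$ to get the pivot $1/x_k$, use the chain rule to identify the modified entries with $\partial f_i'/\partial x_j$, and reduce to a block-triangular determinant. The only cosmetic difference is that you clear the last row via column operations while the paper clears the last column via row operations; the resulting $(i,j)$-entries and the cofactor expansion are identical.
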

      
\begin{proof}
It follows from $\exp(f_k) = g(x_1,\ldots,x_{k-1}) \, x_k$ that the $(k,j)$-entry of
the Jacobian matrix $\partial (f_1,\ldots,f_k)/\partial(x_1,\ldots,x_k)$ is $1/x_k$
for $j=k$, and  $g^{-1} \,\partial g / \partial x_j$ for $1 \leq j < k$. Keeping the
last row, we make the $i$-row for $i \neq k$ have the last entry 0 by employing
elementary row operations. Then  the $(i,j)$-entry becomes 
\begin{align*}
  \frac{\partial f_i}{\partial x_j}- \frac{x_k}{g} \frac{\partial g}{\partial x_j}
  \frac{\partial f_{i}}{\partial x_k} & = \frac{\partial f_i}{\partial x_j}
  - \frac{1}{g^2} \frac{\partial g}{\partial x_j} \frac{\partial f_{i}}{\partial x_k} \\
  &=\frac{\partial f_i}{\partial x_j} + \frac{\partial g^{-1}}{\partial x_j}
  \frac{\partial f_{i}}{\partial x_k} \\
  &=\frac{\partial f_i}{\partial x_j} + \frac{\partial x_k}{\partial x_j}
  \frac{\partial f_{i}}{\partial x_k}\,.
\end{align*}
As the last term is equal to $\partial f'_i/\partial x_j$, we deduce
Equation~\eqref{eqn.xx}. 
\end{proof}

Recall that the gluing equation of an edge $e_i$ of $\calT_{\Ddot}$ is of the form
\begin{equation*}
\sum_{j=1}^N  G_{ij} \log z_j + G'_{ij} \log z'_j
+ G''_{ij} \log z''_j  =2 \pi i \, .
\end{equation*}
We denote by $g_i$ the left-hand side so that $\exp(g_i)=1$.
By rearranging the index of edges, we may assume that  the first $n+1$ equations
$g_1,\ldots, g_{n+1}$ are for O and U-edges and the rest, say $g_{n+2},\ldots, g_N$,
are for C and R-edges. By replacing the $(n+1)$-st equation $g_{n+1}$ with $g_\mu$, the
logarithm form of the completeness equation of a meridian, the 1-loop invariant is
given by (see Remark~\ref{rmk.der})
\begin{equation}
\label{eqn.t1}
\tau(\calT_{\Ddot}) = \pm \frac{1}{2{\boldsymbol \zeta}^{\boldsymbol f + \boldsymbol g}}
\det \left(\dfrac{\partial (g_1,\ldots, g_{n}, g_\mu,g_{n+2},\ldots,g_N)}
{\partial (z_1,\ldots,z_N)} \right) 
\end{equation}
where $\boldsymbol f + \boldsymbol g$ is the flattening of $\calT_{\Ddot}$ chosen in
Section~\ref{sec.flat}.

In what follows,  we solve all C and R-edge equations $g_{n+2},\ldots,g_{N}$ as well
as the completeness equation $g_\mu$. This eliminates $N-n$ shape parameters, and
we may assume that those are  $z_{n+1},\ldots, z_{N}$ by rearranging the index of
shape parameters.
\begin{itemize}
\item
  Recall that a C-edge corresponds to a crossing of $\Ddot$. Given that $\Ddot$ is
  given as in Figure~\ref{fig.planardiagram}, one of E and W-corners at the crossing
  is non-essential, and thus the gluing equation of the C-edge is either
  \begin{equation*}
  z_{N} z_{E} z_{S}  = 1 \quad \textrm{or} \quad z_{N} z_{W} z_{S} =1 \, .
\end{equation*}
where $z_X$ means the shape parameter at the X-corner. Employing Lemma~\ref{lem.red},
we eliminate the shape parameter $z_S$ of the S-corner.
\item
  Similarly, an R-edge  corresponds to a region of $\Ddot$, and its gluing equation is
  of the form $ z_{i_1} \cdots  z_{i_k}  = 1$ where the S-corner of the topmost
  crossing and the N-corner of the bottommost crossing contribute one shape parameter.
  Using Lemma~\ref{lem.red}, we eliminate the shape parameter at the N-corner of the
  bottommost crossing. 
\item
  Lastly, the completeness equation for a meridian is given as
  $z_{i_1}^{\pm1} z_{i_2}=1$ where $z_{i_1}$ and $z_{i_2}$ are shape parameters placed
  at corners shown as in Figure~\ref{fig.meridian}. We eliminate the shape parameter
  $z_{i_2}$ by employing Lemma~\ref{lem.red}.
\begin{figure}[htpb!]
\begingroup%
  \makeatletter%
  \providecommand\color[2][]{%
    \errmessage{(Inkscape) Color is used for the text in Inkscape, but the package 'color.sty' is not loaded}%
    \renewcommand\color[2][]{}%
  }%
  \providecommand\transparent[1]{%
    \errmessage{(Inkscape) Transparency is used (non-zero) for the text in Inkscape, but the package 'transparent.sty' is not loaded}%
    \renewcommand\transparent[1]{}%
  }%
  \providecommand\rotatebox[2]{#2}%
  \newcommand*\fsize{\dimexpr\f@size pt\relax}%
  \newcommand*\lineheight[1]{\fontsize{\fsize}{#1\fsize}\selectfont}%
  \ifx\svgwidth\undefined%
    \setlength{\unitlength}{60.89809808bp}%
    \ifx\svgscale\undefined%
      \relax%
    \else%
      \setlength{\unitlength}{\unitlength * \real{\svgscale}}%
    \fi%
  \else%
    \setlength{\unitlength}{\svgwidth}%
  \fi%
  \global\let\svgwidth\undefined%
  \global\let\svgscale\undefined%
  \makeatother%
  \begin{picture}(1,1.7707328)%
    \lineheight{1}%
    \setlength\tabcolsep{0pt}%
    \put(0,0){\includegraphics[width=\unitlength,page=1]{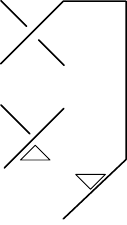}}%
    \put(0.26867142,1.04137265){\color[rgb]{0,0,0}\makebox(0,0)[lt]{\lineheight{1.25}\smash{\begin{tabular}[t]{l}$\vdots$\end{tabular}}}}%
    \put(0,0){\includegraphics[width=\unitlength,page=2]{meridian.pdf}}%
    \put(0.21654074,0.36690471){\color[rgb]{0,0,0}\makebox(0,0)[lt]{\lineheight{1.25}\smash{\begin{tabular}[t]{l}$z_{i_1}$\end{tabular}}}}%
    \put(0.61280731,0.47359231){\color[rgb]{0,0,0}\makebox(0,0)[lt]{\lineheight{1.25}\smash{\begin{tabular}[t]{l}$z_{i_2}$\end{tabular}}}}%
  \end{picture}%
\endgroup%

\caption{A completeness equation for a meridian.}
\label{fig.meridian}
\end{figure}
\end{itemize}

As mentioned earlier, the above variable reduction eliminates $N-n$ shape parameters
$z_{n+1},\ldots,z_N$, and Lemma~\ref{lem.red} implies that
\begin{equation}
\label{eqn.t2}
\det \left(\dfrac{\partial (g_1,\ldots, g_{n}, g_\mu,g_{n+2},\ldots,g_N)}
  {\partial (z_1,\ldots,z_N)} \right)  = \frac{1}{z_{n+1} \cdots z_N} \det
\left(\dfrac{\partial (g'_1,\ldots, g'_{n})}{\partial (z_1,\ldots,z_{n})} \right) 
\end{equation}
where $g'_i$ is obtained from $g_i$ by eliminating the $N-n$ shape parameters
$z_{n+1},\ldots,z_N$. A simple calculation shows that the number $n$ of the remaining
variables is equal to the number of variables that are assigned to the open diagram
$\Ddot$. Namely, the matrix in the right-hand side of Equation~\eqref{eqn.t2} has
the same size as the matrix in~\eqref{eqn.OT}.

\begin{lemma} 
\label{lem.cc}
We have 
\begin{equation*}
\det \left ( x_i \dfrac{\partial}{\partial x_i} \left(x_j\, \dfrac{\partial V}{
\partial x_j} \right)\right)_{1 \leq i,j \leq n} = \pm z_1 \ldots z_{n} \det
\left(\dfrac{\partial (g'_1,\ldots, g'_{n})}{\partial (z_1,\ldots,z_{n})} \right) 
\end{equation*}
where $V$ is the potential function of $\Ddot$.
\end{lemma}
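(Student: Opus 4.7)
The plan is to recognize both sides of the identity as Jacobians of systems of $n$ equations in $n$ variables that encode the same complete hyperbolic structure, and to show that they differ only by a well-understood diagonal scaling whose determinant is $\pm z_1 \cdots z_n$.

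The first step is to make explicit a bijection between the residual shape parameters $z_1, \dots, z_n$ (the ones surviving the reduction of Section~\ref{sec.reduction}) and the Yokota variables $x_1, \dots, x_n$. Under the Moon--Yoon isomorphism recalled in Section~\ref{sub.PD} (see Figure~\ref{fig.crossing2}), each tetrahedron of $\calT_{\Ddot}$ is placed at an essential corner of $\Ddot$ and its shape parameter is determined by the Yokota variables on the adjacent segments. The eliminations of Section~\ref{sec.reduction} (C-edges, R-edges, meridian) remove exactly the shape parameters at S-corners of crossings, the N-corners of bottommost crossings in each region, and the one forced by the meridian; these are precisely the tetrahedra whose shapes are redundant once the remaining ones are specified. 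The surviving $n$ tetrahedra can then be indexed by the $n$ segments carrying Yokota variables, giving a canonical map $z_i \leftrightarrow x_i$.

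The second step, and the crux of the proof, is to match the reduced O- and U-edge gluing equations with the critical equations of $V$. Concretely, writing $h_j := x_j\,\partial V/\partial x_j$, I want to show that $h_j \equiv \pm g'_{\sigma(j)} \pmod{2\pi i}$ for the bijection $\sigma$ from Step~1. On the $V$-side, the local contribution to $h_j$ from each essential corner incident to the segment of $x_j$ is, by Figure~\ref{fig.crossing}, a logarithm of the form $\pm \log(1 - x_j/x_k)$ or $\pm \log(x_j/x_k)$, depending on the over/under type and sign of the crossing. On the gluing side, the O- or U-edge corresponding to the over- or under-arc through $x_j$ has a gluing equation that, after the C, R, and meridian substitutions from Section~\ref{sec.reduction} (carried out via Lemma~\ref{lem.red}), becomes the same sum of logarithms. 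The match would be carried out by a local check at each of the few crossing types appearing in the 2-bridge diagram of Figure~\ref{fig.planardiagram}.

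The third step assembles the pieces. Writing $M_{ij} := x_i\,\partial h_j/\partial x_i$, one has
\begin{equation*}
\det M = (x_1\cdots x_n)\,\det\!\left(\frac{\partial h_j}{\partial x_i}\right).
\end{equation*}
From Step~2, $\partial h_j/\partial x_i = \pm \sum_k (\partial g'_{\sigma(j)}/\partial z_k)(\partial z_k/\partial x_i)$, and because the map $x \mapsto z$ is a monomial change of coordinates $z_k = \prod_i x_i^{A_{ki}}$, the chain-rule Jacobian $\det(\partial z/\partial x)$ equals $(\prod z_k / \prod x_i)\,\det(A)$. Substituting gives
\begin{equation*}
\det M = \pm\,\det(A)\cdot(z_1\cdots z_n)\,\det\!\left(\frac{\partial g'_j}{\partial z_i}\right),
\end{equation*}
so the lemma reduces to showing $\det(A) = \pm 1$. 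This unimodularity is a combinatorial fact about the collapsed triangulation of a 2-bridge diagram, which I would extract from an inductive walk through the twist regions. The main obstacle is Step~2: verifying that after performing the elimination in the specific order of Section~\ref{sec.reduction}, the surviving O- and U-edge gluing equations coincide on the nose with the derivatives of $V$, with no stray sign or $2\pi i$ discrepancy beyond the overall $\pm$. This is a finite but delicate local case analysis, and the restricted structure of the 2-bridge diagrams in Figure~\ref{fig.planardiagram} is exactly what keeps it manageable.
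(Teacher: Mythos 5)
Your overall strategy is the same as the paper's: identify the two determinants as Jacobians of equivalent systems related by a monomial change of variables, and compute the Jacobian of that change. But two of your steps are left as substantial unexecuted work that the paper disposes of immediately. First, your Step~2 --- the ``main obstacle,'' matching the reduced O- and U-edge gluing equations with the critical point equations $x_j\,\partial V/\partial x_j$ via a local case analysis --- is precisely the content of \cite[Thm.~4.1]{MY2018}, which is already recalled in Section~\ref{sub.PD}; the paper simply cites it and applies the chain rule, so no crossing-by-crossing verification is needed (and note the equivalence only needs to hold up to invertible recombination of the equations, not ``on the nose,'' since both sides contribute only a sign to the determinant). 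Second, in Step~3 you reduce to the unimodularity of a general exponent matrix $A$ and defer its proof to an ``inductive walk through the twist regions.'' This is more machinery than required: reading Figure~\ref{fig.crossing2} along the 2-bridge diagram gives explicitly $z_1=x_1$ and $z_i=x_i/x_{i-1}$ or $x_{i-1}/x_i$ for $i\ge 2$, so $A$ is lower triangular with $\pm1$ on the diagonal and
\begin{equation*}
\det\left(\dfrac{\partial (z_1,\ldots,z_n)}{\partial (x_1,\ldots,x_n)}\right)=\pm\frac{z_1\cdots z_n}{x_1\cdots x_n}
\end{equation*}
follows by a one-line induction. With those two simplifications your argument collapses to the paper's proof; as written, the proposal is a correct outline with its two hardest steps still owed.
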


\begin{proof}
It is proved in \cite[Theorem~4.1]{MY2018} that the system of equations
$x_i \partial V / \partial x_i$ is equivalent to the system of equations $g'_i$.
Hence we have
\begin{align*}
\det \left (\dfrac{\partial}{\partial x_i} \left(x_j\,
\dfrac{\partial V}{ \partial x_j} \right)\right) &= \pm \det
\left(\dfrac{\partial (g'_1,\ldots, g'_{n})}{\partial (z_1,\ldots,z_{n})} \right)
\det \left(\dfrac{\partial (z_1,\ldots, z_{n})}{\partial (x_1,\ldots,x_{n})} \right) \,.
\end{align*}
If we label the indices of $z_i$ and $x_i$ from the top of the diagram $\Ddot$ to the
bottom, we have $z_1 =x_1$ and $z_i = x_{i}/x_{i-1}$ or $x_{i-1}/x_{i}$ for $i \geq 2$.
A simple induction argument shows that
\begin{equation*}
\det \left(\dfrac{\partial (z_1,\ldots, z_{n})}{\partial (x_1,\ldots,x_{n})} \right)
= \pm  \frac{z_1 \cdots z_n}{x_1 \cdots x_n} \,.
\end{equation*}
Combining the above two equations, we conclude the lemma.
\end{proof}

\begin{lemma} We have
\label{lem.ccc}
\begin{equation*}
  \Omega_1 \Omega_2 =
  \pm \frac{\zeta_1 \cdots \zeta_N}{{\boldsymbol \zeta}^{\boldsymbol f + \boldsymbol g}}
\end{equation*}
where $\Omega_1$ and $\Omega_2$ are the normalizing functions of $\Ddot$.
\end{lemma}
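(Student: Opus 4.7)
The proof proceeds by decomposing both sides of the identity as products of local factors indexed by the crossings of $\Ddot$, with boundary corrections at the cup, cap, and endpoints, and then verifying the matching factor by factor.

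First I would compute the left-hand side locally. Using the explicit flattening from Equation~\eqref{eqn.fl1}, each essential tetrahedron $\Delta_j$ contributes one of $\zeta_j = 1/z_j$, $\zeta_j' = 1/(1-z_j)$, or $\zeta_j'' = 1/(z_j(z_j-1))$ to $\boldsymbol{\zeta}^{\boldsymbol f}$, depending on whether $\Delta_j$ sits at an N/S-corner, at the E/W-corner of a positive crossing, or at the E/W-corner of a negative crossing. Since $\zeta_j = 1/z_j$, we have $\zeta_1 \cdots \zeta_N = 1/(z_1 \cdots z_N)$, so the ratio $(\zeta_1 \cdots \zeta_N)/\boldsymbol{\zeta}^{\boldsymbol f}$ factors into a product over crossings, each factor expressible in the shape parameters at that crossing. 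Using the dictionary in Figure~\ref{fig.crossing2} to convert shape parameters into ratios of the Yokota variables $x, x', y, y'$ on the segments surrounding the crossing, I would verify directly that this local factor agrees with the corresponding contribution to $\Omega_1 \Omega_2$ specified in Figures~\ref{fig.omega1} and~\ref{fig.omega2} (the $\Omega_2$ half naturally absorbs the N/S contributions of $\boldsymbol{\zeta}^{\boldsymbol f}$, while the $\Omega_1$ half absorbs the E/W contributions).

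Next I would handle the $\boldsymbol g$-correction. The triple $\boldsymbol g$ shifts $\langle \boldsymbol f, \cdot \rangle$ from $3$ to $2$ on $o_0$ and $u_\infty$, and from $1$ to $2$ on $o_\infty$ and $u_0$, by cascading via Lemma~\ref{lem.overunder} along the over- and under-arcs of $\Ddot$. By Remark~\ref{rmk.ratio}, each elementary cascade step contributes a factor of $z_1 z_2$ or $(z_1 z_2)^{-1}$ in terms of the shape parameters at the two relevant corners. The plan is to check that the telescoped effect at the cup and cap of $\Ddot$ reproduces exactly the cup/cap rational functions appearing in $\Omega_1$ in Figure~\ref{fig.omega1}, so that multiplying the crossing-by-crossing equalities from the first step and combining with this boundary term yields the claimed global identity up to an overall sign.

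The main obstacle is the combinatorial bookkeeping in the boundary step. Because $\boldsymbol g$ is assembled by an inductive procedure that telescopes along the arcs, one must verify that the cumulative contribution at $o_\infty, o_0, u_\infty, u_0$ reproduces the cup/cap rational functions of $\Omega_1$ exactly, including signs and modulo the ambiguity in the choice of $\boldsymbol g$ (which, by Remark~\ref{rmk.ratio}, does not affect $\boldsymbol{\zeta}^{\boldsymbol g}$). A secondary subtlety arises in the twist regions of the 2-bridge diagram of Figure~\ref{fig.planardiagram}: some E/W corners there are non-essential, so the corresponding tetrahedra are absent and the local factor on the $\boldsymbol{\zeta}^{\boldsymbol f}$ side degenerates. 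One must check that on the $\Omega_1 \Omega_2$ side, the associated rational function degenerates in precisely the compensating way, so the matching still holds crossing-by-crossing throughout the twist regions.
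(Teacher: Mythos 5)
Your overall strategy---factor both sides into local contributions and match them---is in the same spirit as the paper's proof, and your first step is close to the paper's first half: the paper shows directly that $\Omega_1\Omega_2=1/{\boldsymbol\zeta}^{\boldsymbol f}$ by a purely local computation at each crossing (after rewriting $\Omega_2$ in an orientation-free form as a product of $x'y/xy'$-type factors, so that $\Omega_1\Omega_2$ becomes the product of $(\tfrac{x'}{x}-1)(\tfrac{y}{y'}-1)$ over crossings). Note, however, that the paper matches $\Omega_1\Omega_2$ against $1/{\boldsymbol\zeta}^{\boldsymbol f}$ alone, whereas you match it against $(\zeta_1\cdots\zeta_N)/{\boldsymbol\zeta}^{\boldsymbol f}$; these agree at a crossing only because of the C-edge gluing equation $z_Nz_Ez_S=1$, a relation you never invoke and which is the crux of the whole argument.

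The genuine gap is in your second step. For the $2$-bridge diagrams of Figure~\ref{fig.planardiagram}, the corners chosen at the cup and cap for $\Omega_1$ are non-essential (their segments carry the constants $0$, $\infty$ or $1$), so they contribute nothing to $\Omega_1$---see the $6_1$ example, where $\Omega_1$ has only two factors, both from crossings, and $\Omega_2=1$. On the other hand ${\boldsymbol\zeta}^{\boldsymbol g}=(z_1z_N)^{\pm1}$, the product of the shape parameters at the topmost and bottommost corners, which is generically not $\pm1$. So the proposed matching of the ${\boldsymbol g}$-correction against ``the cup/cap rational functions appearing in $\Omega_1$'' cannot hold. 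What actually closes the argument is that $\zeta_1\cdots\zeta_N$ itself equals $\pm{\boldsymbol\zeta}^{\boldsymbol g}$: since the product of the three zetas at each surviving crossing is $1$ by the C-edge equation, the product $\zeta_1\cdots\zeta_N$ telescopes to $\zeta_1\zeta_N=1/(z_1z_N)$, and Remark~\ref{rmk.ratio} identifies this with ${\boldsymbol\zeta}^{\boldsymbol g}$ independently of the choice of $\boldsymbol g$. In short, the ${\boldsymbol g}$-factor cancels against the residual zetas at the two extreme corners of the diagram, not against any cup/cap factor of $\Omega_1$; without the C-edge telescoping neither of your two local matchings closes up.
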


\begin{proof}
Recall that the second normalizing function $\Omega_2$ depends on a choice of the
orientation of $\Ddot$. If we reverse the orientation of $\Ddot$, the contribution of
the left (resp., right) crossing in Figure~\ref{fig.omega2} to $\Omega_2$ becomes
$(y/y')^2$ (resp., $(x/x')^2$).
Considering both orientations of $\Ddot$, one can define $\Omega_2$ without an
orientation choice: $\Omega_2$ is defined as the product of $x'y/xy'$ for the first
crossing in Figure~\ref{fig.omega1} and $xy'/x'y$ for the second crossing. It follows
that $\Omega_1 \Omega_2$ is given by the product of $(\frac{x'}{x}-1)$ and
$(\frac{y}{y'}-1)$ for the first crossing, and $(\frac{x}{x'}-1)$ and
$(\frac{y'}{y}-1)$ for the second crossing.
From Figure~\ref{fig.octahedronatd} we easily computes that this product is equal
to $1/{\boldsymbol \zeta}^{\boldsymbol f}$. This proves that $\Omega_1 \Omega_2
=1/{\boldsymbol \zeta}^{\boldsymbol f}$.

On the other hand, a product of any three zetas around one crossing is 1. Given that
$\Ddot$ is given as in Figure~\ref{fig.planardiagram}, we have 
\begin{equation*}
\zeta_1 \cdots \zeta_N = \zeta_1 \zeta_N = \frac{1}{z_1 z_N}
\end{equation*}
where $z_1$ and $z_N$ are the shape parameters that appear first from the top and
bottom, respectively, of the diagram. From Remark~\ref{rmk.ratio}, we deduce that
$z_1 z_N$ is equal ${\boldsymbol \zeta}^{\boldsymbol g}$. This completes the proof.
\end{proof}

\begin{remark}
We expect that a similar lemma to the one above holds for all essential diagrams
obtained from braid closures of knots; see Appendix~\ref{sec.appb}.
\end{remark}

\begin{proof}[Proof of Theorem~\ref{thm.main2}]
Combining Equations~\eqref{eqn.t1} and \eqref{eqn.t2} together with Lemmas
~\ref{lem.cc} and~\ref{lem.ccc}, we obtain
\begin{align*}
\tau(\calT_{\Ddot}) &= \pm \frac{1}{2{\boldsymbol \zeta}^{\boldsymbol f + \boldsymbol g}}
\det \left(\dfrac{\partial (g_1,\ldots, g_{n}, g_\mu,g_{n+2},\ldots,g_N)}
{\partial (z_1,\ldots,z_N)} \right) \\
&= \pm \frac{\zeta_{n+1} \cdots \zeta_N}{2{\boldsymbol \zeta}^{\boldsymbol f + \boldsymbol g}}
\det \left(\dfrac{\partial (g'_1,\ldots, g'_{n})}{\partial (z_1,\ldots,z_{n})} \right) \\
&=\pm \frac{\zeta_1 \cdots \zeta_N}{2{\boldsymbol \zeta}^{\boldsymbol f + \boldsymbol g}}
\det \left ( x_i \dfrac{\partial}{\partial x_i} \left(x_j\, \dfrac{\partial V}{
\partial x_j} \right)\right)  \\
&=\pm \frac{\Omega_1 \Omega_2}{2} 	\det \left ( x_i \dfrac{\partial}{\partial x_i}
\left(x_j\, \dfrac{\partial V}{ \partial x_j} \right)\right) = \omega(\Ddot)\,,
\end{align*}
which concludes the proof of Theorem~\ref{thm.main2}.
\end{proof}


\appendix

\section{Further discussion on collapsing}
\label{sec.app}

In this section we illustrate the collasping process explained in
Section~\ref{sec.collap}. For simplicity, we only consider alternating diagrams, but
the non-alternating case can be described in a similar way.

Let $D$ be an alternating diagram of a knot $K \subset S^3$ and  $\Ddot$ be
an open diagram obtained from $D$ by cutting a segment $s$ of $D$.
Recall from Section~\ref{sec.octa} that $\calO_D$  is an ideal
triangulation of $S^3 \setminus (K \cup \{ \pm \infty\})$ with tetrahedra placed at
corners of $D$ and that $\calT_{\Ddot}$ is an ideal triangulation obtained from
$\calO_D$ by removing some tetrahedra lying around $s$ and changing some
face-pairings.

Chopping off the vertices of the tetrahedra, we obtain a compact 3-manifold whose
boundary consists of one triangulated torus $\nu(K)$ and two triangulated spheres
$\nu(\pm \infty)$.
When $\calO_D$ collapses to $\calT_{\Ddot}$, the boundary surfaces $\nu(K)$ and
$\nu(\pm \infty)$ are glued after losing some triangles around $s$. In what follows,
we describe how these surfaces change.

\begin{itemize}
\item
  For the boundary torus $\nu(K)$, three cylinders around $s$ are removed, shown as
  in Figure~\ref{fig.region}~(right). There are six boundary circles, but the
  face-pairing results in four of them being paired; see dashed arrows in Figure
  ~\ref{fig.region}~(right). The resulting surface $\nu'(K)$ is  a cylinder that
  wraps around $K$ except for the segment $s$. 
\item
  For the triangulated sphere $\nu(+\infty)$, the removed triangles are described in
  Figure~\ref{fig.drawing}~(left). We divide the removed area into three sections: the
  body, which is the union of two regions of $D$ adjacent to $s$; the tail,
  on the left  side of the body; and the head, on the right side of the body.
  After the gluing, the tail is closed up, and both the head and body become discs; see
  dashed arrows in Figure~\ref{fig.drawing}~(left).  Namely, the sphere $\nu(+\infty)$
  becomes a sphere $\nu'(+\infty)$ with two holes.
\item
  Similarly, the other sphere $\nu(-\infty)$ also becomes a sphere  $\nu'(-\infty)$
  with two holes; see Figure~\ref{fig.drawing}~(right).
  Note that topologically, both $\nu'(\pm \infty)$ are cylinders.
\end{itemize}

\begin{figure}[htpb!]
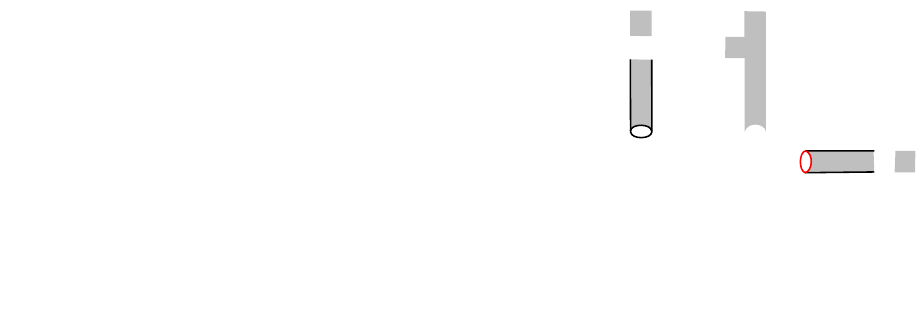
\caption{Modified face-pairings and $\nu'(K)$}
\label{fig.region}
\end{figure}

\begin{figure}[htpb!]
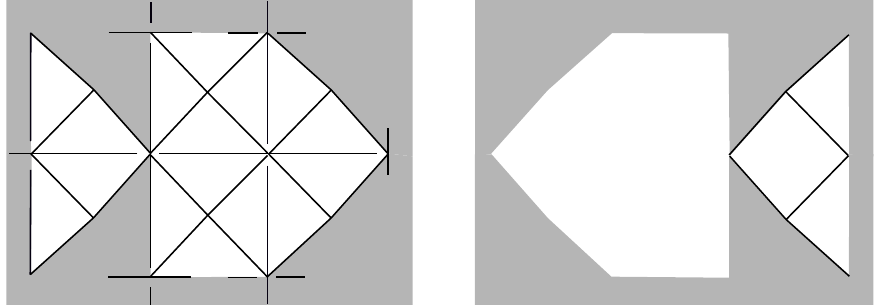
\caption{$\nu'(+\infty$) and $\nu'(-\infty)$.}
\label{fig.drawing}
\end{figure}

Two cylinders $\nu'(\pm \infty)$ are glued along the boundary circles coming from
their body parts and form one cylinder. This leads to identifications between U-edges
and O-edges. More precisely, for each segment $s'$ lying in the body part, the U-edge
corresponding to the under-arc containing $s'$ is identified with the O-edge
corresponding to the over-arc containing $s'$. The two remaining boundary circles,
coming from heads, are glued to the boundary circles of $\nu'(K)$. Given that $+\infty$
and $-\infty$ are points far above and below the diagram, respectively, the union of
$\nu'(K)$ and $\nu'(\pm \infty)$ form a tubular neighborhood of the knot $K$. This
shows that the underlying space of $\calT_{\Ddot}$ is the knot complement
$S^3 \setminus K$.

\section{Braid closures}
\label{sec.appb}

In this section we explain how to find a flattening of the collapsed triangulation
$\calT_{\Ddot}$ when $\Ddot$ is given by a closure of a braid, more precisely,
when $\Ddot$ is an open diagram obtained from a braid by taking the closure of all
strands except the first one. See Figure~\ref{fig.example2} for an example.

\begin{figure}[htpb!]
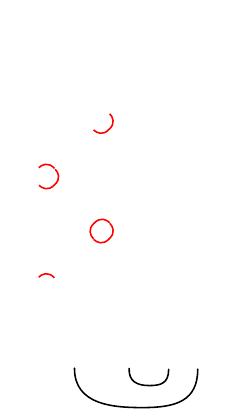
\caption{An open diagram of the knot $6_1$.}
\label{fig.example2}
\end{figure}
 
The triple $\boldsymbol{f}$ given in Equation~\eqref{eqn.fl1} still satisfies
Lemma~\ref{lem.C}, but not Lemma~\ref{lem.R}. Precisely, if we repeat the proof of
Lemma~\ref{lem.R}, we obtain the following: for an R-edge $e$, we have
$\langle {\boldsymbol f}, e \rangle=0$ if $e$ corresponds to the innermost region,
created by closing up the last strand of the braid; otherwise,
$\langle {\boldsymbol f}, e \rangle=2$. See, for instance, Figure~\ref{fig.example2}
that the innermost region does not have a crossing whose N or S-corner lies in
the region. 

To make $\langle {\boldsymbol f}, e \rangle=2$ hold for all R-edges $e$, we use the
following lemma.

\begin{lemma}
\label{lem.region}
Let $e_i$ and $e_j$ be two edges of $\calT_{\Ddot}$ that correspond to  adjacent regions.
Then there is a triple $\boldsymbol h=(h,h', h'')$ of vectors such that
$h_j + h'_j + h_j''=0$ and 
\begin{equation*}
\langle {\boldsymbol f + \boldsymbol h}, e \rangle -\langle {\boldsymbol f}, e \rangle
=\begin{cases}
-1 & \textrm{if } e = e_i \\
1 & \textrm{if } e = e_j \\
0 & \textrm{otherwise}
\end{cases}
\end{equation*}
In addition, such $\boldsymbol h$ can be chosen independently of $\boldsymbol f$.
\end{lemma}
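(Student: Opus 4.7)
My plan is to mimic the proof of Lemma~\ref{lem.overunder}. Let $s$ be a segment of $\Ddot$ separating the two regions $r_i, r_j$ corresponding to $e_i, e_j$, and let $c$ be a crossing at an endpoint of $s$. Among the four wedges at $c$, the two adjacent to $s$ lie one in $r_i$ and one in $r_j$; let $\Delta_i$ and $\Delta_j$ denote the corresponding tetrahedra of $\calT_{\Ddot}$. I would take $\boldsymbol h$ supported only on $\Delta_i, \Delta_j$, with integer triples summing to zero (componentwise on each tetrahedron) whose first ($z$-)components equal $-1$ and $+1$, respectively, the analogue of the $(0,1,-1)/(0,-1,1)$ pattern of Figure~\ref{fig.crossing4}.

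The checks on C- and R-edges are then immediate from Figure~\ref{fig.octahedronatd}, exactly as in Lemma~\ref{lem.R}. The R-edge $e_i$ gets $-1$ from $\Delta_i$ via its $z$-edge (and nothing from $\Delta_j$, which lies in $r_j$); $e_j$ gets $+1$ from $\Delta_j$; the C-edge of $c$ receives $-1+1=0$; and all other R- and C-edges are untouched since $\boldsymbol h$ is localized at $c$ and in $r_i \cup r_j$. What remains is to select the $z'$- and $z''$-components so that the net change to every O- and U-edge also vanishes.

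This is the main obstacle. The two tetrahedra $\Delta_i$ and $\Delta_j$ are adjacent wedges of the octahedron at $c$; they share a triangular face whose two non-equatorial edges connect the top/bottom vertices of the octahedron to the equator vertex on $s$. On these two shared edges the $\pm 1$ contributions from $\Delta_i$ and $\Delta_j$ appear with opposite signs and cancel automatically. The four remaining non-equatorial edges of $\Delta_i$ and $\Delta_j$ lie on the faces shared with the other two wedges at $c$, and their identifications with the neighboring octahedra along the four segments emanating from $c$ permit a choice of $z'$- and $z''$-entries so that the remaining contributions pair up and cancel, in complete parallel with the cancellation in Lemma~\ref{lem.overunder}. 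A small linear system fixes this choice uniquely up to the exactness relation.

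If either $\Delta_i$ or $\Delta_j$ has been collapsed in the passage from $\calO_D$ to $\calT_{\Ddot}$, I would run the same construction at the crossing at the other endpoint of $s$; if the shared segment itself is not unique one may also choose a different $s$. In every case the construction is dictated entirely by the local combinatorics of the chosen crossing, so $\boldsymbol h$ is independent of $\boldsymbol f$, as claimed.
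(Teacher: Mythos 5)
Your construction is the paper's own: you support $\boldsymbol h$ on the two tetrahedra at the pair of adjacent corners at one end of the common segment, put $\mp 1$ in the first ($z$-) slots, and the bookkeeping for the C-edge and the two R-edges goes exactly as in Lemma~\ref{lem.R}; the paper simply exhibits the explicit triples $(-1,0,1),(1,-1,0)$ for an underpass and $(-1,1,0),(1,0,-1)$ for an overpass (Figure~\ref{fig.crossing3}).

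The gap is that the only nontrivial content of the lemma --- that the $z'$- and $z''$-slots can be filled so that every O- and U-edge receives net contribution $0$ --- is asserted rather than verified, and the cancellation mechanism you describe is not the right one. First, the contributions to the two non-equatorial edges of the face shared by $\Delta_i$ and $\Delta_j$ do not ``cancel automatically'': each such edge receives one of $h_i',h_i''$ from $\Delta_i$ and one of $h_j',h_j''$ from $\Delta_j$, and the vanishing of these sums is a constraint on the choice, satisfiable only because it is compatible with $h_i'+h_i''=1$ and $h_j'+h_j''=-1$ (forced by the zero-sum condition and the first coordinates). Second, the remaining four non-equatorial edges cannot have their contributions killed by ``identifications with the neighboring octahedra along the four segments emanating from $c$'': those neighboring tetrahedra carry no $\boldsymbol h$, so there is nothing there to cancel against. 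What actually happens, and what the paper checks with its explicit triples, is that the second and third coordinates affect only two edge classes of $\calT_{\Ddot}$, one O-edge and one U-edge, and each of these receives one $+1$ and one $-1$ from within $\{\Delta_i,\Delta_j\}$ itself. Without writing down the triples and tracing which over- or under-arc each affected edge belongs to, your appeal to ``a small linear system'' assumes exactly what is to be proved. (Your closing caveat about collapsed tetrahedra is a fair point that the paper's proof does not address, but it does not repair this main step.)
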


\begin{proof}
Let $r_i$ and $r_j$ be the regions corresponding to the edges $e_i$ and $e_j$,
respectively. As they are adjacent, we can find two adjacent corners, with one lying
in $r_i$ and the other in $r_j$. If these corner are separated by an underpass, then
we define $\boldsymbol h$ by assigning integer triples to the corners as in
Figure~\ref{fig.crossing3} (left); if by an overpass, as in Figure~\ref{fig.crossing3}
(right). 
\begin{figure}[htpb!]
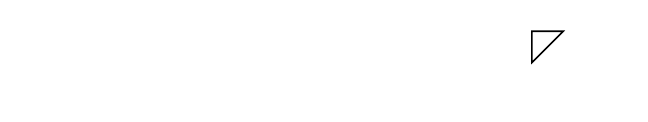
\caption{A choice of $\boldsymbol h$.}
\label{fig.crossing3}
\end{figure}
	
\noindent
Then from the first coordinate of triples, we deduce that
\begin{equation*}
  \langle {\boldsymbol f + \boldsymbol h}, e_i\rangle = \langle {\boldsymbol f},
  e_i \rangle -1, \quad  \quad  \langle {\boldsymbol f + \boldsymbol h}, e_j \rangle
  = \langle {\boldsymbol f}, e_j \rangle +1 \, .
\end{equation*}
The change of the second and third coordinates affects two edges, one U-edge and one
O-edge, but both receive one $+1$ and one $-1$, resulting in a total of 0.
\end{proof}

\begin{remark}
Adjacent regions have a segment in common, and the proof of Lemma~\ref{lem.region}
works for any pair of corners at one end of the segment. In particular,  a choice of
$\boldsymbol h$ may not be not unique. However, regardless of a choice of
$\boldsymbol h$, the product ${\boldsymbol \zeta}^{\boldsymbol h}$ is invariant (see
Remark~\ref{rmk.ratio}).
Moreover, ${\boldsymbol \zeta}^{\boldsymbol h}$ for Figure~\ref{fig.crossing3} agrees
with the inverse of what Ohtsuki and Takata referred to as $\alpha$ in \cite{OT2015}
that is assigned to the segment.
\end{remark}

Roughly speaking, Lemma ~\ref{lem.region} allows us to take one contribution from a
region and give it to an adjacent region, and thus to any region, by applying the
lemma multiple times.
We take one contribution from a region adjacent to an $\infty$-segment and give it
to the innermost region.  This can be done by (a) following $\infty$-segments to
reach the unbounded region and then (b) by passing through the caps, illustrated as
in Figure~\ref{fig.cup}.
\begin{figure}[htpb!]
\begingroup%
  \makeatletter%
  \providecommand\color[2][]{%
    \errmessage{(Inkscape) Color is used for the text in Inkscape, but the package 'color.sty' is not loaded}%
    \renewcommand\color[2][]{}%
  }%
  \providecommand\transparent[1]{%
    \errmessage{(Inkscape) Transparency is used (non-zero) for the text in Inkscape, but the package 'transparent.sty' is not loaded}%
    \renewcommand\transparent[1]{}%
  }%
  \providecommand\rotatebox[2]{#2}%
  \newcommand*\fsize{\dimexpr\f@size pt\relax}%
  \newcommand*\lineheight[1]{\fontsize{\fsize}{#1\fsize}\selectfont}%
  \ifx\svgwidth\undefined%
    \setlength{\unitlength}{154.4999863bp}%
    \ifx\svgscale\undefined%
      \relax%
    \else%
      \setlength{\unitlength}{\unitlength * \real{\svgscale}}%
    \fi%
  \else%
    \setlength{\unitlength}{\svgwidth}%
  \fi%
  \global\let\svgwidth\undefined%
  \global\let\svgscale\undefined%
  \makeatother%
  \begin{picture}(1,0.56230423)%
    \lineheight{1}%
    \setlength\tabcolsep{0pt}%
    \put(0,0){\includegraphics[width=\unitlength,page=1]{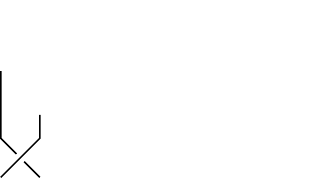}}%
    \put(0.13530472,-0.02627781){\makebox(0,0)[lt]{\lineheight{1.25}\smash{\begin{tabular}[t]{l}$\infty$\end{tabular}}}}%
    \put(0,0){\includegraphics[width=\unitlength,page=2]{cup.pdf}}%
    \put(0.17233007,0.05987681){\makebox(0,0)[lt]{\lineheight{1.25}\smash{\begin{tabular}[t]{l}(a)\end{tabular}}}}%
    \put(0,0){\includegraphics[width=\unitlength,page=3]{cup.pdf}}%
    \put(0.00483383,0.42880602){\makebox(0,0)[lt]{\lineheight{1.25}\smash{\begin{tabular}[t]{l}(b)\end{tabular}}}}%
    \put(0.39563113,0.37055661){\makebox(0,0)[lt]{\lineheight{1.25}\smash{\begin{tabular}[t]{l}$\cdots$\end{tabular}}}}%
    \put(0,0){\includegraphics[width=\unitlength,page=4]{cup.pdf}}%
    \put(0.10986914,0.24837915){\makebox(0,0)[lt]{\lineheight{1.25}\smash{\begin{tabular}[t]{l}$\vdots$\end{tabular}}}}%
  \end{picture}%
\endgroup%

\caption{From a region adjacent to $s_\infty$ to the innermost region.}
\label{fig.cup}
\end{figure}
Similarly, we take one contribution from a region adjacent to a $0$-segment and give
it to the innermost region, by following $0$-segments and passing through the cups.
This results in a triple $\boldsymbol h$ such that
$\langle \boldsymbol f+\boldsymbol h \, , e \rangle = 2$
for all C and R-edges $e$.

For O and U-edges, we can use Lemma~\ref{lem.O} as we did in Section~\ref{sec.flat}.
This results in  another triple $\boldsymbol g $ such that 
\begin{equation*}
\langle \boldsymbol f+ \boldsymbol h + \boldsymbol g , e \rangle  
=\begin{cases}
  \langle \boldsymbol f+ \boldsymbol h , e \rangle & \textrm{if } e
  \textrm{ is a C or R-edge} \\
2 & \textrm{if } e \textrm{ is an O or U-edge} 
\end{cases}\,.
\end{equation*}
Namely, $\boldsymbol f+ \boldsymbol h+\boldsymbol g $ is a flattening of $\calT_{\Ddot}$.


\bibliographystyle{hamsalpha}
\bibliography{biblio}
\end{document}